\newcommand{\Lbd}{\boldsymbol{\Lambda}}
\newcommand{\SO}{\texttt{SO}}
\newcommand{\Xinit}{\bx_0}
\newcommand{\Vinit}{\bv_0}
\newcommand{\bdx}{\dot{\bx}}
\newcommand{\bddx}{\ddot{\bx}}
\newcommand{\bhH}{\mathbf{\hat{H}}}
\newcommand{\bhy}{\mathbf{\hat{y}}}
\newcommand{\bSigma}{\mathbf{\Sigma}}
\newcommand{\rr}{\mathrm{r}}
\newcommand{\T}{\mathrm{T}}
\newcommand{\HH}{\mathrm{H}}
\newcommand{\rmc}{\mathrm{c}}
\newcommand{\bhM}{\mathbf{\hat{M}}}
\newcommand{\bhD}{\mathbf{\hat{D}}}
\newcommand{\bhK}{\mathbf{\hat{K}}}
\newcommand{\bhB}{\mathbf{\hat{B}}}
\newcommand{\bhC}{\mathbf{\hat{C}}}
\newcommand{\bhX}{\mathbf{\hat{X}}}
\newcommand{\bhV}{\mathbf{\hat{V}}}
\newtheorem{definition}{Definition}[section]
\newtheorem{proposition}{Proposition}[section]
\newcounter{mymac@matlab}
\newcommand{\matlab}{MATLAB%
	\ifnum\value{mymac@matlab}<1%
	\textsuperscript{\textregistered}%
	\setcounter{mymac@matlab}{1}%
	\fi%
}
\begin{document}\sloppy
  

\title{Model reduction for second-order systems with inhomogeneous initial conditions}
  
\author[$\ast,\diamond$]{Jennifer Przybilla}
\affil[$\diamond$]{Max Planck Institute for Dynamics of Complex Technical Systems, Sandtorstra{\ss}e, 39106 Magdeburg, Germany.}
\affil[$\ast$]{
  \email{przybilla@mpi-magdeburg.mpg.de}, \orcid{0000-0002-8703-8735}}
  
\author[$\dagger,\diamond$]{Igor Pontes Duff}
\affil[$\dagger$]{
  \email{pontes@mpi-magdeburg.mpg.de}, \orcid{0000-0001-6433-6142}}

\author[$\circ,\diamond$]{Peter Benner}
\affil[$\circ$]{
	\email{benner@mpi-magdeburg.mpg.de}, \orcid{0000-0003-3362-4103}}

\shorttitle{MOR for SO systems with  inhomogeneous initial conditions}
\shortauthor{J. Przybilla, I. Pontes Duff, P. Benner}
\shortdate{}

\keywords{second-order systems, model order reduction, balanced truncation, inhomogeneous systems}

  
\abstract{%
In this paper, we consider the problem of finding surrogate models for large-scale second-order linear time-invariant systems with inhomogeneous initial conditions. 
For this class of systems, the superposition principle allows us to decompose the system behavior into three independent components.  
The first behavior corresponds to the transfer between the input and output having zero initial conditions. 
In contrast, the other two correspond to the transfer between the initial position and the initial velocity conditions having zero input, respectively. 
Based on this superposition of systems, our goal is to propose model reduction schemes allowing to preserve the second-order structure in the surrogate models. To this aim, we introduce tailored second-order Gramians for each system component and compute them numerically, solving Lyapunov equations.  As a consequence, two methodologies are proposed. The first one consists in reducing each of the components independently using a suitable balanced truncation procedure. The sum of these reduced systems provides an approximation of the original system.  This methodology allows flexibility on the order of the reduced-order model.  The second proposed methodology consists in extracting the dominant subspaces  from the sum of Gramians to build the projection matrices leading to a surrogate model.
Additionally, we discuss error bounds for the overall output approximation. Finally, the proposed methods are illustrated by means of benchmark problems.}

\novelty{This article proposes two new reduction techniques for second-order systems with inhomogeneous initial conditions.
We make use of the superposition principles to obtain three subsystems that can be evaluated separately. 
For this purpose, we derive new Gramians that describe the initial condition to output behavior. 
We discuss one algorithm that reduces the subsystems individually and one algorithm that generates one reduced system approximating the input and initial condition to output behavior. Additionally, we propose error bounds for both methods.}

\maketitle

  \section{Introduction}
  
  Second-order dynamical systems arise in many engineering applications, e.g., electrical circuits, structural dynamics,  and vibration analysis.
  In many setups, these systems are modeled by partial differential equations having second-order time-derivatives.  In order to compute the numerical simulations, spatial discretizations are needed, leading to high fidelity models.  
  However, those high fidelity models may present a high number of degrees of freedom, which are not suitable for numerical computations.  
  Consequently, model order reduction techniques are used to construct reduced-order models, that approximate the behavior of the original system.
  
  Most reduction techniques assume that the considered systems have zero initial conditions.  Consequently, these methods fail in approximating systems if they have inhomogeneous initial conditions.  Additionally, the corresponding error estimators of these methods are not applicable in this case.  This work is dedicated to finding surrogate models for second-order systems with inhomogeneous initial conditions while preserving the system structure.

  In the literature, there exist several reduction methods dedicated to systems with homogeneous initial conditions. Examples are singular value based approaches such as balanced truncation \cite{morMoo81, morTomP87, morBenOCetal17} and Hankel norm approximations \cite{morGlo84}. Additionally, there exist Krylov based methods such as the iterative rational Krylov algorithm (IRKA) \cite{morBenOCetal17, morGugSW13, morFlaBG12}, as well as, data driven methods such as the Loewner framework \cite{morMayA07}. 
  In this work, we consider second-order continuous-time dynamical systems governed by the system of differential equations 
  \begin{subequations}\label{eq:SO_sys}
  	\begin{align}
  		\bM\bddx(t) + \bD \bdx(t) + \bK\bx(t) &= \bB\bu(t), \label{eq:SO_sys_in} 
  		\\
  		\by(t) &= \bC\bx(t), \label{eq:SO_sys_out}
  		\\ 
  		\bx(0) &= \bx_0, \quad \bdx(0) = \bdx_0, 
  	\end{align}
  \end{subequations}
  where $\bM, \bD, \bK\in \R^{n \times n}$, $\bB \in \R^{n\times m}$,  $\bC\in \R^{p \times n}$, $\bx(t)\in\mathbb{R}^n$, $\bu(t)\in\mathbb{R}^m$ and $\by(t)\in\mathbb{R}^p$. 
  We assume that the position and velocity initial condition are not known  a priori.  However, they are assumed to lie in two known subspaces $\cX_0:=\myspan{\bX_0}$ and $\cV_0:= \myspan{\bV_0}$, respectively, with $\bX_0\in \R^{n\times n_{\bx_0}}$ and $\bV_0\in \R^{n\times n_{\bv_0}}$. 
  Hence,  the initial conditions can be expressed as 
  \begin{equation}\label{eq:InitCond}
  	\bx(0) = \bX_0 \bz_0,~\text{and}\quad \bdx(0) = \bV_0 \bw_0.  
  \end{equation}

  Our main goal in this work is to find low dimensional surrogate models for the system \eqref{eq:SO_sys} with inhomogeneous conditions \eqref{eq:InitCond} preserving the second-order system structure. By structure-preserving,  we mean to  determine Petrov-Galerkin projection matrices $\bW, \bV\in \R^{n \times r}$ leading to  a reduced second-order system
  \begin{align}\label{eq:SO_red}
  		\bhM\bddx_{\rr}(t) +\bhD \bdx_{\rr}(t) + \bhK\bx_{\rr}(t) &= \bhB\bu(t),\nonumber\\
  		\by_{\rr}(t) &= \bhC\bx_{\rr}(t),\\
  		\bx_{\rr}(0) &= \bhX_0 \hat{\bz}_0, \quad \bdx_{\rr}(0) = \bhV_0 \hat{\bw}_0,\nonumber
  \end{align}
  with $\bhM=\bW^{\T}\bM\bV$, $\bhD= \bW^{\T}\bD\bV$, $\bhK = \bW^{\T}\bK\bV$, $\bhB=\bW^{\T}\bB$, $ \bhC=\bC\bV$, $\bhX_0 = \bW^{\T}\bX_0$, $\bhV_0 = \bW^{\T}\bV_0$, and $\bx_{\rr}(t)\in \R^r$.

  In the literature, there exist several methods enabling model order reduction preserving the second-order structure  \cite{morFre05, morChaGVetal05}. 
  These techniques range from balanced truncation as well as balancing related model order reduction \cite{morChaLVetal06, morReiS08, morSor05} to moment matching approximations based on the Krylov subspace method \cite{morSalL06, morBeaG05}. 
  The recent work \cite{morSaaSW19} provided an extensive comparison among common second-order model reduction methods applied to a large-scale mechanical fishtail model. 
  Additionally,  \cite{morBeaG09} proposed interpolation based methods for systems possessing very general dynamical structures. More recently, the authors in \cite{morBenGP19} propose a new philosophy enabling to find the dominant reachability and observability subspaces enabling very accurate reduced-order models preserving the structure. Moreover, an extension of the Loewner framework was proposed in  \cite{morPonGB20a} for the class of Rayleigh damped systems and in \cite{morSchUBG18} for  general structured systems.

  Up to our knowledge, there is no dedicated work on system theoretical model reduction of second-order systems with inhomogeneous initial conditions. 
  For the class of first-order systems with inhomogeneous conditions, we briefly review four proposed approaches from the literature. 
  In \cite{morBauBF14}, the authors proposed to shift the state by the initial condition $\Xinit$, e.g. the new state is given as $\tilde{\bx}(t):=\bx(t)-\Xinit$. 
  That way, the initial condition is included in the input and output equation and therefore considered in the reduction process. 
  This method, however, is not straightforward applicable to second-order systems if we have a velocity initial condition and want to preserve the second-order structure.
  
  In \cite{morHeiRA11} the input $\bB\bu(t)$ is extended by the initial condition space $\bX_0$.
  More detailed, a new input matrix $\tilde{\bB}:=[\bB\;\bX_0]$ and a new input $[\bu(t)\; \bz_0]^{\T}$ are defined such that the initial condition is taken into account applying reduction methods.
  As in the previous method, this approach is not feasible if we consider velocity initial conditions in the second-order case.
  
  In \cite{morBeaGM17}, the authors' strategy is to decompose the system into a zero initial condition system and a system with initial conditions but no input.  
  The sum of the two corresponding outputs provides the original output. 
  This superposition is used to reduce these two systems, separately. 
  Extensions of the proposed methodology for the class of bilinear systems is proposed in \cite{morCaoBPetal20}  and \cite{morRedP22} based on different splittings.
  
  A recent approach \cite{morSchV20} proposes a new balanced truncation procedure based on the shift transformation on the state. This transformation is depending on design parameters allowing some flexibility and enabling the generalization of the methodologies proposed in \cite{morHeiRA11}  and \cite{morBeaGM17}.  Additionally, those parameters can be optimized, leading to accurate reduced-order models.

  In this paper, the superposition ideas in \cite{morBeaGM17} are extended to the class of second-order systems.
  For the later class, we show that, due to the superposition principle,  the original system can be decomposed into three subsystems.
  The first subsystem corresponds to the map between the input $\bu(t)$ and the output while the initial conditions are set to zero. Additionally, the second subsystem is the output resulting from the position initial condition $\bx(0)$ and the third one corresponds to the output obtained using the velocity initial condition $\dot{\bx}(0)$.
  Hence, we analyze the three corresponding subsystems separately.
  For the systems which result from the initial conditions, we develop balancing based reduction techniques based on tailored Gramians that are introduced in this paper.

  Here, two model reduction schemes are proposed. The first one consists in reducing each of the components independently using a suitable balanced truncation procedure. Hence, the sum of these reduced systems provides an approximation of the original system. As a consequence, an advantage of this approach is that the reduced dimensions and therefore the accuracies can be chosen flexibly. The second proposed methodology consists in extracting the dominant subspaces  from the sum of Gramians to build the projection matrices leading to one surrogate model.

  The rest of the paper is organized as follows. In Section~\ref{sec:BT}, we present balanced truncation for first and second-order systems.
  Afterwards, in Section \ref{sec:superpo}, we deduce a superposition of the second-order system \eqref{eq:SO_sys}.
  In Section \ref{sec:MOR}, tailored Gramians for inhomogeneous second-order systems are derived. Based on these Gramians, two model reduction schemes are proposed in Section \ref{sec:MOR_Schemes}.
  Finally, Section \ref{sec:ErrEst} provides the resulting error estimation and in Section \ref{sec:NumRes}, the methodologies are illustrated in two numerical examples.

  \section{Balanced truncation}\label{sec:BT}
  In this section, we briefly present balanced truncation method for first-order and second-order systems having zero initial conditions.
  
  \subsection{First-order systems}\label{sec:BT_FO}
  We consider the first-order dynamical system with zero initial conditions
  
  \begin{align}\label{eq:FO_sys}
  	\begin{split}
  		\boldsymbol{\cE}\dot{\bz}(t) &= \boldsymbol{\cA}\bz(t) + \boldsymbol{\cB}\bu(t),\\
  		\by(t) &= \boldsymbol{\cC}\bz(t),\\
  		\bz(0) &= 0,
  	\end{split}
  \end{align}
  with $\boldsymbol{\cA},\boldsymbol{\cE}\in\mathbb{R}^{N\times N},$ 
  $ \boldsymbol{\cB}\in\mathbb{R}^{N\times m},$ $\boldsymbol{\cC}\in\mathbb{R}^{p\times N}$, $\bz(t)\in\mathbb{R}^{N}$, $ \bu(t)\in\mathbb{R}^{m},\, \by(t)\in\mathbb{R}^{p}$.
  We assume that the system is asymptotically stable, i.e. all eigenvalues $\lambda$ of the matrix pencil $\boldsymbol{\cA}-\lambda\boldsymbol{\cE}$ fulfill $\mathrm{Re}(\lambda)<0$. 
  
  The goal of balanced truncation is to find a reduced-order model, that approximates the input-output behavior of \eqref{eq:FO_sys}.
  We recall the Laplace transform $\cL\{\bw\}$ of a function $\bw$ defined for positive values as
  \[
  \bW(s):=\cL\{\bw\}(s)=\int_{0}^{\infty}\bw(t)\exp(-st)\mathrm{d}t.
  \]
  The Laplace transform fulfills the initial condition property
  \[ \cL\{\dot{\bw}\}(s) = s\cL\{\bw\}(s) - \bw(0). \]
  Applying the Laplace transform to system \eqref{eq:FO_sys} provides 
  \[
  \bY(s) = \boldsymbol{\cC}\left( \boldsymbol{\cA} -s\boldsymbol{\cE}\right)^{-1}\boldsymbol{\cB}\bU(s),
  \]
  where $\bY$ and $\bU$ are the Laplace transforms of $\by$ and $\bu$.
  The mapping $\bH(s) := \boldsymbol{\cC}\left( \boldsymbol{\cA} -s\boldsymbol{\cE} \right)^{-1}\boldsymbol{\cB}$ is called \emph{transfer function}.
  \begin{definition}
  	The \emph{input to state mapping} $\boldsymbol{\cR}$ and the \emph{state to output mapping} $\boldsymbol{\cS}$ of system \eqref{eq:FO_sys} are
  	\[
  	\boldsymbol{\cR}(s) := \left( \boldsymbol{\cA} -s\boldsymbol{\cE}\right)^{-1}\boldsymbol{\cB},\qquad \boldsymbol{\cS}(s) : =\boldsymbol{\cC}\left( \boldsymbol{\cA} -s\boldsymbol{\cE}\right)^{-1}.
  	\]
  	The corresponding \emph{controllability} and the \emph{transformed observability Gramian} are defined as
  	\[
  	\boldsymbol{\cP} = \int_{\R} \boldsymbol{\cR}(i\omega)\boldsymbol{\cR}(-i\omega)^{\T}\mathrm{d}\omega,    \quad \boldsymbol{\cQ} = \int_{\R} \boldsymbol{\cS}(-i\omega)^{\T}\boldsymbol{\cS}(i\omega)\mathrm{d}\omega. 
  	\]
  \end{definition}
  The Gramian $\boldsymbol{\cP}$ and the transformed Gramian $\boldsymbol{\cQ}$ can be computed by solving the Lyapunov equations 
  \[
  \boldsymbol{\cA}\boldsymbol{\cP}\boldsymbol{\cE}^{\T} + \boldsymbol{\cE}\boldsymbol{\cP}\boldsymbol{\cA}^{\T} = -\boldsymbol{\cB}\boldsymbol{\cB}^{\T}, \quad \boldsymbol{\cA}^{\T}\boldsymbol{\cQ}\boldsymbol{\cE} + \boldsymbol{\cE}^{\T}\boldsymbol{\cQ}\boldsymbol{\cA} = -\boldsymbol{\cC}^{\T}\boldsymbol{\cC}.
  \]
  Small singular values of $\boldsymbol{\cP}$ and $\boldsymbol{\cE}^{\T}\boldsymbol{\cQ}\boldsymbol{\cE}$ correspond to states, that are difficult to reach and to observe.
  In order to truncate small singular values of $\boldsymbol{\cP}$ and $\boldsymbol{\cE}^{\T}\boldsymbol{\cQ}\boldsymbol{\cE}$, simultaneously, we transform the system such that the transformed Gramians $\tilde{\boldsymbol{\cP}}$, $\tilde{\boldsymbol{\cQ}}$ fulfill $\tilde{\boldsymbol{\cP}}=\tilde{\boldsymbol{\cQ}}=\boldsymbol{\Sigma} =\diag{\sigma_1,\dots,\sigma_n}$,
  where $\sigma_1,\dots,\sigma_n$ are called \emph{Hankel singular values}.
  This transformation is called balancing.
  Afterwards, we truncate the $n-r$ smallest Hankel singular values $\sigma_{r+1},\dots,\sigma_n$, $r\ll n$.
  Therefore, we consider the low-rank factors $\bR\bR^{\T}=\boldsymbol{\cP}$ and $\bS\bS^{\T}=\boldsymbol{\cQ}$ and compute the singular value decomposition 
  $
  \bS^{\T}\boldsymbol{\cE}\bR = \bU\boldsymbol{\Sigma}\bX^{\T}.
  $
  The resulting projection matrices that do both, balance and truncate, are 
  \[
  \boldsymbol{\cW} := \bS \bU_{\rr} \boldsymbol{\Sigma}_{\rr}^{-\frac{1}{2}}, \qquad \boldsymbol{\cV} := \bR \bX_{\rr}\boldsymbol{\Sigma}_{\rr}^{-\frac{1}{2}},
  \]
  where $\boldsymbol{\Sigma}_{\rr}:=\diag{\sigma_1,\dots, \sigma_r}$ and $\bU_{\rr}$ and $\bX_{\rr}$ include the $r$ leading columns of $\bU$ and $\bX$.
  The balanced and truncated system is then given by 
  \begin{align}\label{eq:red_FO_sys}
  	\begin{split}
  		\bdx_{\rr}(t) &= \boldsymbol{\cW}^{\T}\boldsymbol{\cA}\boldsymbol{\cV} \bx_{\rr}(t) + \boldsymbol{\cW}^{\T}\boldsymbol{\cB} \bu(t),\\
  		\by_{\rr}(t) &= \boldsymbol{\cC} \boldsymbol{\cV} \bx_{\rr}(t)
  	\end{split}
  \end{align}
  since 
  $
  \boldsymbol{\cW}^{\T}\boldsymbol{\cE}\boldsymbol{\cV}=\bI_r.
  $
  For more details about standard balanced truncation, see \cite{morMoo81, morBenB17}.
  
  \subsection{Second-order systems}\label{sec:BT_SO}
  Balanced truncation for second-order systems \eqref{eq:SO_sys} with zero initial conditions is presented in \cite{morChaLVetal06}.
  By applying the Laplace transform to the second-order system system \eqref{eq:SO_sys} with zero initial conditions provides the following transfer function:
  \[
  \bH_{\mathtt{SO}}(s) = \bC(s^2\bM + s\bD + \bK)^{-1}\bB.
  \]
  First, we define the input to state and the state to output mapping that result from the transfer function.
  \begin{definition}\label{def:SO_mappingsGramians}
  	The \emph{input to state mapping} $\boldsymbol{\cR}_{\emph{\text{\SO}}}$ and the \emph{state to output mapping} $\boldsymbol{\cS}_{\emph{\text{\SO}}}$ of the second-order system \eqref{eq:SO_sys} with zero initial conditions are
  	\begin{align*}
  		\boldsymbol{\cR}_{\emph{\text{\SO}}}(s) &:= \left( s^2\bM +s\bD + \bK \right)^{-1}\bB,\\
  		\boldsymbol{\cS}_{\emph{\text{\SO}}}(s) &:=\bC\left( s^2\bM +s\bD + \bK\right)^{-1}.
  	\end{align*}
  	The corresponding \emph{second-order controllability Gramian} $\bP_{\emph{\text{\SO}}}$ and \emph{observability Gramian} $\bQ_{\emph{\text{\SO}}}$ are defined by 
  	\begin{align*}
  		\bP_{\emph{\text{\SO}}}&:=\int_{\R} \boldsymbol{\cR}_{\emph{\text{\SO}}}(iw)\boldsymbol{\cR}_{\emph{\text{\SO}}}(-iw)^{\T}\mathrm{d}w,\\
  		\bQ_{\emph{\text{\SO}}}&:=\int_{\R} \boldsymbol{\cS}_{\emph{\text{\SO}}}(-iw)^{\T}\boldsymbol{\cS}_{\emph{\text{\SO}}}(iw)\mathrm{d}w.
  	\end{align*}
  \end{definition}
  In order to reduce the second-order system \eqref{eq:SO_sys} with zero initial conditions, we transform it to a first-order system \eqref{eq:FO_sys} by setting
  \begin{align*}
  	\boldsymbol{\cE}&:=\begin{bmatrix}
  		\bI & 0 \\
  		0 & \bM
  	\end{bmatrix}, \quad
  	\boldsymbol{\cA}:=\begin{bmatrix}
  		0 & \bI \\
  		-\bK & -\bD
  	\end{bmatrix},\quad \boldsymbol{\cB}:=\begin{bmatrix}
  	0 \\
  	\bB
  \end{bmatrix}, \\ \boldsymbol{\cC}&:= \begin{bmatrix}
  \bC & 0
\end{bmatrix}.
\end{align*}
The first-order system is then equivalent to the second-order system and the corresponding transfer function is given by
\[
\bH_{{\text{\SO}}}(s) = \boldsymbol{\cC}\left( \boldsymbol{\cA} -s\boldsymbol{\cE} \right)^{-1}\boldsymbol{\cB} = \bC(s^2\bM + s\bD + \bK)^{-1}\bB.
\]
Note that there exist several first-order representation that are equivalent to system \eqref{eq:SO_sys}.
We determine the second-order controllability Gramian of system \eqref{eq:SO_sys} with zero initial conditions by considering the Gramian of the first-order system \eqref{eq:FO_sys} as described in the following proposition.
\begin{proposition}
The second-order controllability Gramian 
$\bP_{\emph{\text{\SO}}}$ of System \eqref{eq:SO_sys} with zero initial conditions is equal to the upper left block $\bP_1$ of the first-order controllability Gramian
\begin{align*}
\boldsymbol{\cP} &= \begin{bmatrix}
\bP_1 & \bP_2 \\
\bP_2^{\T} & \bP_3
\end{bmatrix} = \int_{\R}(\boldsymbol{\cA}-iw\boldsymbol{\cE})^{-1}\boldsymbol{\cB}\boldsymbol{\cB}^{\T}(\boldsymbol{\cA}+iw\boldsymbol{\cE})^{-\T}\mathrm{d}w\\
&= \int_{\R}\begin{bmatrix}
				-iw \bI & \bI\\
				-\bK & -\bD -iw\bM
			\end{bmatrix}^{-1}\begin{bmatrix}
			0\\
			\bB
		\end{bmatrix}
		\boldsymbol{\cB}^{\T}(\boldsymbol{\cA}+iw\boldsymbol{\cE})^{-\T}\mathrm{d}w.
\end{align*}
\end{proposition}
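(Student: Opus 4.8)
The plan is to compute the upper left block $\bP_1$ of the first-order controllability Gramian $\boldsymbol{\cP}$ explicitly and to recognize it as the second-order controllability Gramian $\bP_{\text{\SO}}$ from Definition~\ref{def:SO_mappingsGramians}. The key ingredient is a closed form for the input-to-state map of the chosen first-order realization. Solving $(\boldsymbol{\cA}-s\boldsymbol{\cE})\begin{bmatrix}\bx_1 \\ \bx_2\end{bmatrix} = \begin{bmatrix}0 \\ \bB\end{bmatrix}$ blockwise, the first block row gives $\bx_2 = s\bx_1$; inserting this into the second block row yields $-(s^2\bM+s\bD+\bK)\bx_1 = \bB$, so that
\begin{align*}
	(\boldsymbol{\cA}-s\boldsymbol{\cE})^{-1}\boldsymbol{\cB} = -\begin{bmatrix}\bI \\ s\bI\end{bmatrix}\bigl(s^2\bM+s\bD+\bK\bigr)^{-1}\bB = -\begin{bmatrix}\bI \\ s\bI\end{bmatrix}\boldsymbol{\cR}_{\text{\SO}}(s).
\end{align*}
This manipulation is valid whenever $s^2\bM+s\bD+\bK$ is nonsingular, which by asymptotic stability of the pencil $\boldsymbol{\cA}-\lambda\boldsymbol{\cE}$ holds for all $s=iw$, $w\in\R$; the same fact ensures that the integrals below converge.

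Next I would substitute $s=iw$ and $s=-iw$ into this identity. Since $\boldsymbol{\cA},\boldsymbol{\cE},\boldsymbol{\cB}$ are real, $\boldsymbol{\cB}^{\T}(\boldsymbol{\cA}+iw\boldsymbol{\cE})^{-\T} = \bigl((\boldsymbol{\cA}+iw\boldsymbol{\cE})^{-1}\boldsymbol{\cB}\bigr)^{\T} = -\boldsymbol{\cR}_{\text{\SO}}(-iw)^{\T}\begin{bmatrix}\bI & -iw\bI\end{bmatrix}$, and hence the integrand defining $\boldsymbol{\cP}$ factors as
\begin{align*}
	(\boldsymbol{\cA}-iw\boldsymbol{\cE})^{-1}\boldsymbol{\cB}\boldsymbol{\cB}^{\T}(\boldsymbol{\cA}+iw\boldsymbol{\cE})^{-\T} = \begin{bmatrix}\bI \\ iw\bI\end{bmatrix}\boldsymbol{\cR}_{\text{\SO}}(iw)\boldsymbol{\cR}_{\text{\SO}}(-iw)^{\T}\begin{bmatrix}\bI & -iw\bI\end{bmatrix},
\end{align*}
where the two minus signs cancel. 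Reading off the leading $n\times n$ block and integrating over $w\in\R$ gives $\bP_1 = \int_{\R}\boldsymbol{\cR}_{\text{\SO}}(iw)\boldsymbol{\cR}_{\text{\SO}}(-iw)^{\T}\,\mathrm{d}w = \bP_{\text{\SO}}$ by Definition~\ref{def:SO_mappingsGramians}, which is the claim.

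I do not expect a genuine obstacle here: the argument is essentially one block-elimination identity. The two points that need care are the sign bookkeeping — the $-1$ coming from the block inverse appears once in $(\boldsymbol{\cA}-iw\boldsymbol{\cE})^{-1}\boldsymbol{\cB}$ and once in $\boldsymbol{\cB}^{\T}(\boldsymbol{\cA}+iw\boldsymbol{\cE})^{-\T}$, so it cancels and $\bP_1$ has the correct sign — and the well-posedness remark that $(s^2\bM+s\bD+\bK)^{-1}$ exists for all $s$ on the imaginary axis, which both legitimizes the block manipulation and guarantees finiteness of the Gramian integrals.
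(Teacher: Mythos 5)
Your proposal is correct and follows essentially the same route as the paper: the paper invokes the Schur complement of $(\boldsymbol{\cA}-iw\boldsymbol{\cE})$ to read off the upper-left block of the integrand, and your block elimination of $(\boldsymbol{\cA}-s\boldsymbol{\cE})\bigl[\bx_1^{\T}\;\bx_2^{\T}\bigr]^{\T}=\boldsymbol{\cB}$ is the same computation, just restricted to the column $(\boldsymbol{\cA}-s\boldsymbol{\cE})^{-1}\boldsymbol{\cB}$ and with the sign cancellation and well-posedness on the imaginary axis spelled out explicitly. No gaps.
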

\begin{proof}
Applying the Schur complement provides  that $\bP_1$ is given by
\begin{multline*}
\bP_1 = \int_{\R}((iw)^2\bM + iw\bD + \bK)^{-1}\bB \\
\cdot\bB^{\T}((iw)^2\bM - iw\bD + \bK)^{-\T}\mathrm{d}w.
\end{multline*}
\end{proof}

The matrix $\bP_{\text{\SO}}$ is called position controllability Gramian.
We observe that $\bP_{\text{\SO}}$ encodes the important subspaces of the map between the input to state of the homogeneous second-order system \eqref{eq:SO_sys}.
Hence, $\bP_{\text{\SO}}$ spans the controllability space and is used to apply balanced truncation in the second-order case.

The same argumentation is used to extract the state to output mapping space from the first-order observability Gramian $\boldsymbol{\cQ}= \begin{bmatrix}
\bQ_1 & \bQ_2\\\
\bQ_2^{\T} & \bQ_3
\end{bmatrix}$.
The second-order observability Gramian $\bQ_{\text{\SO}}$ presented in Definition \eqref{def:SO_mappingsGramians} is equal to the velocity observability Gramian $\bQ_3$.

As in the first-order case we use the low-rank factors $\bR_1$ and $\bS_3$ with $\bP_1 = \bR_1\bR_1^{\T}$ and $\bQ_3 = \bS_3\bS_3^{\T}$ and compute the singular value decomposition 
$
\bS_3^{\T}\bR_1 = \bU\boldmath{\Sigma}\bX^{\T}.
$
The resulting balancing and truncating projection matrices are 
\begin{equation}\label{eq:SOprojection_matrices}
	\bW := \bS_3 \bU_{\rr} \boldsymbol{\Sigma}_{\rr}^{-\frac{1}{2}}, \qquad \bV := \bR_1 \bX_{\rr}\boldsymbol{\Sigma}_{\rr}^{-\frac{1}{2}},
\end{equation}
where $\boldsymbol{\Sigma}_{\rr}$ is the diagonal matrix containing the $r$ largest singular values of $\boldsymbol{\Sigma}$. 
Moreover, $\bU_{\rr}$ and $\bX_{\rr}$ include the $r$ leading columns of $\bU$ and $\bX$.
Projecting by $\bW$ and $\bV$ provides the reduced system \eqref{eq:SO_red} with zero initial conditions.

\section{Superposition principle for second-order systems}\label{sec:superpo}
This section aims at  decomposing the original system behavior of the second-order system \eqref{eq:SO_sys} in simpler subsystems. This system decomposition will be the inspiration of the proposed model reduction schemes.

By applying the Laplace transform to equation \eqref{eq:SO_sys_in} we obtain
\begin{align*}
	\bB \bU(s)
	&= \bM \cL\{\ddot{\bx}\}(s) + \bD \cL\{\dot{\bx}\}(s) + \bK\cL\{\bx\}(s)\\
	&= \bM(s^2\bX(s) - s\bx(0) - \dot{\bx}(0)) \\
	&\hspace*{60pt}+ \bD (s\bX(s)- \bx(0)) + \bK\bX(s)
\end{align*}
where $\bX$ is the Laplace transform of $\bx$ and $\bU$ the Laplace transform of $\bu$.
Hence, it holds that
\[(s^2\bM+s\bD+\bK)\bX(s) = \bB\bU(s) +\bD\bx(0) +s\bM\bx(0)+\bM\dot{\bx}(0). \]
Applying the Laplace transform to equation \eqref{eq:SO_sys_out} and defining $\bY$ as the Laplace transform of $\by$ provides 
\begin{multline*}
\bY(s) = \bC\Lbd(s)\bB\bU(s) + \bC\Lbd(s)(s\bM+\bD)\bX_0\bz_0 \\+ \bC\Lbd(s)\bM\bV_0\bw_0
\end{multline*}
for $\Lbd(s) :=  (s^2\bM+s\bD+\bK)^{-1}$.
We observe that the output is a superposition of the input to output mapping, the position initial condition to output mapping and the velocity initial condition to output mapping.
As a consequence, the global input-output behavior is given by 
\begin{equation*}
	\bY(s) =\bC\bX(s) = \bH_{\SO}(s)\bU(s) +\bH_{\Xinit}(s)\bz_0 +  \bH_{\Vinit}(s)\bw_0 
\end{equation*}
where 
\begin{equation*}
	\begin{array}{rl}
		\bH_{\SO}(s) :=  \bC\Lbd(s)\bB, &	
		\bH_{\Xinit}(s) := \bC\Lbd(s)(\bD +s\bM)\bX_0, 
		\\ 
		\text{and}&  
		\bH_{\Vinit}(s) :=\bC\Lbd(s)\bM\bV_0.
	\end{array}
\end{equation*}
Up to now, we saw that three independent transfer functions characterize the inhomogeneous behavior of the second-order realization \eqref{eq:SO_sys}.

The transfer function $\bH_{\SO}(s) = \bC\Lbd(s)\bB$  corresponds to the input and output map without initial conditions. Hence, it is associated with the following realization
\begin{align}\label{eq:SO_homo}
	\begin{split}
		\bM\ddot{\bx}_{\SO}(t) + \bD \dot{\bx}_{\SO}(t) + \bK\bx_{\SO}(t) &= \bB\bu(t), \\
		\by_{\SO}(t) &= \bC\bx_{\SO}(t),\\
		\bx_{\SO}(0)&= 0, \quad  \bdx_{\SO}(0) =0.
	\end{split}
\end{align}
The transfer function $\bH_{\Xinit}(s) = \bC\Lbd(s)(\bD +s\bM)\bX_0$ corresponds to the transfer between the initial position condition and the output. Hence, the following realization is associated to it:
\begin{align}\label{eq:SO_state_init}
		\bM\ddot{\bx}_{\Xinit}(t) + \bD \dot{\bx}_{\Xinit}(t) + \bK\bx_{\Xinit}(t) &= 0,\nonumber \\
		\by_{\Xinit}(t) &= \bC\bx_{\Xinit}(t),\\
		\bx_{\Xinit}(0)&= \bX_0\bz_0, \quad  \bdx_{\Xinit}(0) =0.\nonumber
\end{align}
Finally, we write the realization for $\bH_{\Vinit}(s) =\bC\Lbd(s)\bM\bV_0$.
This transfer function corresponds to the transfer between the initial velocity condition and the output. The following realization is associated to it:
\begin{align}\label{eq:SO_velo_init}
		\bM\ddot{\bx}_{\Vinit}(t) + \bD \dot{\bx}_{\Vinit}(t) + \bK\bx_{\Vinit}(t) &= 0,\nonumber \\
		\by_{\Vinit}(t) &= \bC\bx_{\Vinit}(t),\\
		\bx_{\Vinit}(0)&= 0, \quad  \bdx_{\Vinit}(0) = \bV_0\bw_0.\nonumber
\end{align}
To summarize, we have seen that the output of the inhomogeneous second-order system in \eqref{eq:SO_sys} can be decomposed as
\[ \by(t) = \by_{\SO}(t) + \by_{\Xinit}(t) + \by_{\Vinit}(t) \]
governed by the transfer functions $\bH_{\SO}$, $\bH_{\Xinit}$ and $\bH_{\Vinit}$.
Figure \ref{fig:Sigma_1sys} sketches the input and initial conditions to output behavior of the original second-order system \eqref{eq:SO_sys}, while Figure \ref{fig:Sigma_3sys} draws the superposition of the original system into three independent systems.
Therefore, the sum of the separately computed outputs leads to the same output as the original system \eqref{eq:SO_sys}.
\begin{figure}\label{fig:Sigma_1sysTotal} 
	\centering
	\subfloat[Original system]{%
		\resizebox*{6cm}{!}{\begin{tikzpicture}
			\fill[blue!20!white] (0,0) rectangle (2,1.5);
			\node (Sigma) at (1,0.75) {\Large{$\Sigma$}};
			\draw[->, black!90, very thick]  (-1.5,1.4) -- (0,1.4);
			\node (w) at (-0.75,1.55) {$\bu$};
			\draw[->, black!90, very thick]  (-1.5,0.75) -- (0,0.75);
			\node (w) at (-0.75,0.9) {$\bx_0$};
			\draw[->, black!90, very thick]  (-1.5,0.1) -- (0,0.1);
			\node (u) at (-0.75,0.25) {$\dot{\bx}_0$};
			\draw[->, black!90, very thick]  (2,0.75) -- (3.5,0.75);
			\node (y) at (2.75,0.92) {$\by$};
			\end{tikzpicture}} 
		\label{fig:Sigma_1sys}}\vspace{20pt}
	\subfloat[Superposition of the system]{%
		\resizebox*{6cm}{!}{\begin{tikzpicture}
			\fill[blue!20!white] (0,2) rectangle (1,2.75);
			\node (Sigma) at (0.5,2.36) {\large{$\Sigma_{\SO}$}};
			\fill[blue!20!white] (0,1) rectangle (1,1.75);
			\node (Sigma) at (0.5,1.36) {\large{$\Sigma_{\Xinit}$}};
			\fill[blue!20!white] (0,0) rectangle (1,0.75);
			\node (Sigma) at (0.5,0.36) {\large{$\Sigma_{\dot{\bx}_0}$}};
			\draw[->, black!90, very thick]  (-1.5,2.36) -- (0,2.36);
			\node (w) at (-0.75,2.5) {$\bu$};
			
			\draw[->, black!90, very thick]  (-1.5,1.36) -- (0,1.36);
			\node (w) at (-0.75,1.5) {\small $\bx_0$};
			\draw[->, black!90, very thick]  (-1.5,0.36) -- (0,0.36);
			\node (u) at (-0.75,0.5) {\small $\dot{\bx}_0$};
			\draw[->, black!90, very thick]  (2.2,1.36) -- (3,1.36);
			
			\node (y) at (2.5,1.52) {\small{$\by$}};
			\draw[black!90, very thick]  (1,0.36) -- (2,0.36);
			\node (w) at (1.5,2.5) {\small{$\by_{\SO}$}};
			\draw[black!90, very thick]  (1,1.36) -- (1.8,1.36);
			\node (w) at (1.5,1.5) {\small{$\by_{\bx_0}$}};
			\draw[black!90, very thick]  (1,2.36) -- (2,2.36);
			\node (u) at (1.5,0.5) {\small{$\by_{\dot{\bx}_0}$}};
			\draw[black!90, very thick]  (2,1.56) -- (2,2.36);
			\draw[black!90, very thick]  (2,0.36) -- (2,1.15);
			\draw[black!90, very thick] (2,1.36) circle (6pt);
			\node (Sig) at (2,1.36) {\Large{$+$}};
			\end{tikzpicture}}
		\label{fig:Sigma_3sys}}
	\caption{System structures}
\end{figure}
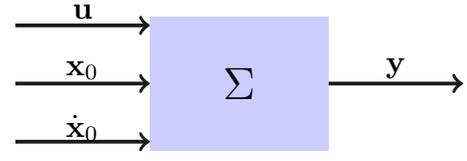
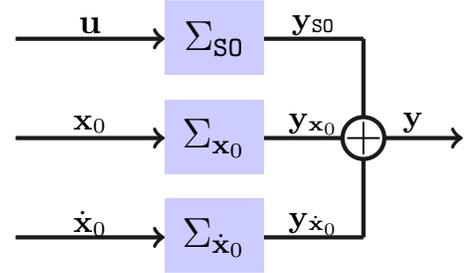

\section{Gramians of inhomogeneous second-order systems}\label{sec:MOR}

In order to derive the proposed model reduction schemes, we analyze separately the three subsystems and we introduce tailored Gramians for each one of them.

Notice that subsystem \eqref{eq:SO_homo} corresponds to a second-order realization with homogeneous initial conditions. Hence,  the controllability and observability Gramians $\bP_{\SO}$ and $\bQ_{\SO}$ presented in Definition \ref{def:SO_mappingsGramians} can be used to characterize its dominant subspaces.

However, subsystems \eqref{eq:SO_state_init} and \eqref{eq:SO_velo_init} have a different structure, and hence, tailored Gramians are required.
In Section \ref{sec:SO_Gramian2} and \ref{sec:SO_Gramian1}, we propose tailored Gramians for these subsystems.
Afterwards, in Section \ref{sec:MOR_Schemes}, we propose two different MOR schemes based on these Gramians.

\subsection{Gramians of $\bH_{\Xinit}$ }\label{sec:SO_Gramian2}

Considering the transfer function $\bH_{\Xinit}(s)$ of system \eqref{eq:SO_state_init} more detailed shows that the input to state mapping differs from the structure in Definition \ref{def:SO_mappingsGramians}.
The state to output mapping, however, is the same.
Hence, we define the input to state mapping and the corresponding second-order Gramian.
\begin{definition}\label{def:in_stat_map_Xinit}
	The \emph{input to state mapping} $\boldsymbol{\cR}_{\Xinit}$ and the corresponding \emph{controllability Gramian} $\bP_{\Xinit}$ of the second-order system \eqref{eq:SO_state_init} are
	\begin{align*}
		&\boldsymbol{\cR}_{\Xinit}(iw):=\bC(s^2\bM + s\bD + \bK)^{-1}(\bD + s\bM)\bX_0,\\
		&\bP_{\Xinit} := \int_{\R} \boldsymbol{\cR}_{\Xinit}(iw)\boldsymbol{\cR}_{\Xinit}(-iw)^{\T}\mathrm{d}w.
	\end{align*}
\end{definition}
\begin{proposition}\label{prop:in_stat_map_Xinit}
	The second-order controllability Gramian $\bP_{\Xinit}$ of System \eqref{eq:SO_state_init} described in Definition \ref{def:in_stat_map_Xinit} is the upper left matrix $\bP_1$ of
	\begin{align*}
		\boldsymbol{\cP} &= \begin{bmatrix}
			\bP_1 & \bP_2 \\
			\bP_2^{\T} & \bP_3
		\end{bmatrix}\\
		&= \int_{\R}\underbrace{(\boldsymbol{\cA}-iw\boldsymbol{\cE})^{-1}}_{:= \boldsymbol{\Gamma}(i w)}\begin{bmatrix}
			\bX_0\\
			0
		\end{bmatrix}\begin{bmatrix}
		\bX_0^{\T} & 0
	\end{bmatrix}(\boldsymbol{\cA}+iw\boldsymbol{\cE})^{-\T}\mathrm{d}w.
\end{align*}
\end{proposition}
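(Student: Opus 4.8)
The plan is to follow the route of the preceding proposition: rewrite subsystem~\eqref{eq:SO_state_init} as a first-order descriptor system with zero input, recognise $\boldsymbol{\cP}$ as the associated first-order controllability Gramian, and then extract its upper-left block by a Schur-complement identity. First I would take the first-order realisation of Section~\ref{sec:BT_SO}, with state $\bz = \begin{bmatrix}\bx_{\Xinit}\\ \dot{\bx}_{\Xinit}\end{bmatrix}$ and matrices $\boldsymbol{\cE}$, $\boldsymbol{\cA}$ exactly as there, and observe that the initial data $\bx_{\Xinit}(0)=\bX_0\bz_0$, $\dot{\bx}_{\Xinit}(0)=0$ translate into $\bz(0) = \begin{bmatrix}\bX_0\\0\end{bmatrix}\bz_0$. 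Taking the Laplace transform of $\boldsymbol{\cE}\dot{\bz} = \boldsymbol{\cA}\bz$ and using $\cL\{\dot{\bz}\}(s) = s\,\cL\{\bz\}(s)-\bz(0)$ gives $(\boldsymbol{\cA}-s\boldsymbol{\cE})\,\cL\{\bz\}(s) = -\boldsymbol{\cE}\bz(0)$; since the $(1,1)$-block of $\boldsymbol{\cE}$ is $\bI$, one has $\boldsymbol{\cE}\begin{bmatrix}\bX_0\\0\end{bmatrix} = \begin{bmatrix}\bX_0\\0\end{bmatrix}$, so $\cL\{\bz\}(s) = -\boldsymbol{\Gamma}(s)\begin{bmatrix}\bX_0\\0\end{bmatrix}\bz_0$ with $\boldsymbol{\Gamma}(s):=(\boldsymbol{\cA}-s\boldsymbol{\cE})^{-1}$. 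Hence $-\boldsymbol{\Gamma}(s)\begin{bmatrix}\bX_0\\0\end{bmatrix}$ is the transfer from the fictitious input $\bz_0$ to the first-order state; as the global sign is immaterial in the outer product, $\boldsymbol{\cP}$ in the statement is exactly the first-order controllability Gramian of this realisation, and $\bP_1$ is its position block in the sense of Section~\ref{sec:BT_SO}.

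The key step is to evaluate the top block of $\boldsymbol{\Gamma}(s)\begin{bmatrix}\bX_0\\0\end{bmatrix}$ in closed form. Writing $\boldsymbol{\cA}-s\boldsymbol{\cE}$ in $2\times 2$ block form, with $(1,1)$-block $-s\bI$, $(1,2)$-block $\bI$, $(2,1)$-block $-\bK$ and $(2,2)$-block $-(\bD+s\bM)$, and applying the block-inverse formula with respect to the $(1,1)$-block, its Schur complement is $-(\bD+s\bM)-(-\bK)(-s\bI)^{-1}\bI = -\tfrac{1}{s}\Lbd(s)^{-1}$, where $\Lbd(s):=(s^2\bM+s\bD+\bK)^{-1}$. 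Consequently the $(1,1)$-block of $\boldsymbol{\Gamma}(s)$ is $-\tfrac{1}{s}\bI+\tfrac{1}{s}\Lbd(s)\bK$, so the top block of $\boldsymbol{\Gamma}(s)\begin{bmatrix}\bX_0\\0\end{bmatrix}$ equals $\tfrac{1}{s}\Lbd(s)\bigl(\bK-\Lbd(s)^{-1}\bigr)\bX_0 = -\Lbd(s)(\bD+s\bM)\bX_0 = -\boldsymbol{\cR}_{\Xinit}(s)$, i.e. minus the input-to-state map of Definition~\ref{def:in_stat_map_Xinit}. This is the same manipulation as in the proof of the previous proposition with $\boldsymbol{\cB}$ replaced by $\begin{bmatrix}\bX_0\\0\end{bmatrix}$; it may equivalently be obtained by the Schur-complement shortcut used there, or simply checked by verifying that $(\boldsymbol{\cA}-s\boldsymbol{\cE})$ applied to the block vector with first component $-\boldsymbol{\cR}_{\Xinit}(s)$ and second component $\bX_0-s\boldsymbol{\cR}_{\Xinit}(s)$ returns $\begin{bmatrix}\bX_0\\0\end{bmatrix}$.

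Finally I would set $s=iw$, form $\boldsymbol{\Gamma}(iw)\begin{bmatrix}\bX_0\\0\end{bmatrix}\begin{bmatrix}\bX_0^{\T}&0\end{bmatrix}(\boldsymbol{\cA}+iw\boldsymbol{\cE})^{-\T}$, and read off its $(1,1)$-block, which by the previous step equals $\boldsymbol{\cR}_{\Xinit}(iw)\,\boldsymbol{\cR}_{\Xinit}(iw)^{\ast}$. Since $\bM,\bD,\bK$ and $\bX_0$ are real, $\boldsymbol{\cR}_{\Xinit}(iw)^{\ast}=\boldsymbol{\cR}_{\Xinit}(-iw)^{\T}$, so integrating over $\R$ yields $\bP_1 = \int_{\R}\boldsymbol{\cR}_{\Xinit}(iw)\boldsymbol{\cR}_{\Xinit}(-iw)^{\T}\,\mathrm{d}w = \bP_{\Xinit}$, which is the claim. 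The only real obstacle is bookkeeping: carrying out the $2\times 2$ block inversion without sign slips and keeping track of the conjugate transpose so that the $(1,1)$-entry of $\boldsymbol{\cP}$ matches the integral defining $\bP_{\Xinit}$; there is no analytic difficulty once the first-order reformulation is in place. (For the dimensions to be consistent, $\boldsymbol{\cR}_{\Xinit}$ in Definition~\ref{def:in_stat_map_Xinit} is to be read as the input-to-state map $\Lbd(s)(\bD+s\bM)\bX_0$, without the output matrix $\bC$, in agreement with the text preceding that definition.)
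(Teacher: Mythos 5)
Your proof is correct and follows essentially the same route as the paper's: a Schur-complement/block-inversion argument identifying the upper-left block of $\boldsymbol{\Gamma}(s)$ as $-(s^2\bM+s\bD+\bK)^{-1}(\bD+s\bM)$, after which the $(1,1)$-block of the integrand is exactly $\boldsymbol{\cR}_{\Xinit}(iw)\boldsymbol{\cR}_{\Xinit}(-iw)^{\T}$ and the claim follows by integration. Your closing remark is also accurate: the factor $\bC$ in Definition~\ref{def:in_stat_map_Xinit} is a typo, and the paper's own proof silently uses $\boldsymbol{\cR}_{\Xinit}(s)=(s^2\bM+s\bD+\bK)^{-1}(\bD+s\bM)\bX_0$ without it.
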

\begin{proof}
	Applying the Schur complement to $\boldsymbol{\Gamma}(iw)$ provides that its upper left block is 
	$
	-((iw)^2\bM+iw\bD+\bK)^{-1}(iw\bM+\bD)
	$	
	and hence it holds that
	\begin{align*}
		\bP_1 &= \int_{\mathbb{R}} ((iw)^2\bM +iw\bD + \bK)^{-1}(iw\bM+\bD)\bX_0\\
		&\qquad \cdot\bX_0^{\T}(-iw\bM+\bD)^{\T}((iw)^2\bM -iw\bD + \bK)^{-\T}\mathrm{d}w\\
		&=\int_{\mathbb{R}}\boldsymbol{\cR}_{\Xinit}(iw)\boldsymbol{\cR}_{\Xinit}(-iw)^{\T}\mathrm{d}w.
	\end{align*}
\end{proof}

Proposition \ref{prop:in_stat_map_Xinit} shows that the second-order controllability Gramian $\bP_{\Xinit}$ of System \eqref{eq:SO_state_init} is given by the upper left part $\bP_1$ of the controllability Gramian $\boldsymbol{\cP}$ of the first-order system \eqref{eq:FO_sys} with $\boldsymbol{\cB}:=\footnotesize\left[\hspace*{-3pt} \begin{array}{c}
\bX_0\\
0 \end{array}\hspace*{-3pt} \right]\normalsize $. Moreover, the second-order observability Gramian $\bQ_{\Xinit}$ is equal to $\bQ_{\SO}$ since the state to output mapping $\boldsymbol{\cS}_{\Xinit}(s) : =\bC\left( s^2\bM +s\bD + \bK\right)^{-1}$ that is used to derive the observability Gramian is equal to the state to output mapping $\boldsymbol{\cS}_{\SO}(s)$ from Definition \ref{def:SO_mappingsGramians} for the homogeneous system case.

\subsection{Gramians of $\bH_{\Vinit}$}\label{sec:SO_Gramian1}
In order to apply balanced truncation to system \eqref{eq:SO_velo_init}, we define the corresponding input to state mapping.
As in the previous section, the state to output mapping is the same as for system \eqref{eq:SO_homo}.
\begin{definition}\label{def:in_velo_map_Vinit}
	The \emph{input to state mapping} $\boldsymbol{\cR}_{\Vinit}$ and the corresponding \emph{controllability Gramian} $\bP_{\Vinit}$ of the second-order system \eqref{eq:SO_velo_init} are
	\begin{align*}
		&\boldsymbol{\cR}_{\Vinit}(s) := \left(s^2\bM + s\bD + \bK \right)^{-1}\bM\bV_0,\\
		&\bP_{\Vinit} := \int_{\R} \boldsymbol{\cR}_{\Vinit}(iw)\boldsymbol{\cR}_{\Vinit}(-iw)^{\T}\mathrm{d}w.
	\end{align*}
\end{definition}
We note that the input to state mapping $\boldsymbol{\cR}_{\Vinit}(s)$ and hence the second-order controllability Gramian $\bP_{\Vinit}$ are of the same structure as in the homogeneous case, presented in Definition \ref{def:SO_mappingsGramians}.
\begin{proposition}
	The second-order controllability Gramian $\bP_{\Vinit}$ of System \eqref{eq:SO_velo_init} described in Definition \ref{def:in_velo_map_Vinit} is the upper left matrix $\bP_1$ of
	\begin{align*}
		\boldsymbol{\cP} &= \begin{bmatrix}
			\bP_1 & \bP_2 \\
			\bP_2^{\T} & \bP_3
		\end{bmatrix}\\
		&= \int_{\R}\underbrace{(\boldsymbol{\cA}-iw\boldsymbol{\cE})^{-1}}_{:= \boldsymbol{\Gamma}(i w)}\begin{bmatrix}
			0\\
			\bM\bV_0
		\end{bmatrix}\\
		&\hspace{90pt}\cdot\begin{bmatrix}
		0 & \bV_0^{\T}\bM^{\T}
	\end{bmatrix}(\boldsymbol{\cA}+iw\boldsymbol{\cE})^{-\T}\mathrm{d}w.
\end{align*}
\end{proposition}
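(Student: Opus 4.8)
The plan is to mimic verbatim the argument already used for the homogeneous second-order controllability Gramian in Section~\ref{sec:BT_SO} and for $\bP_{\Xinit}$ in Proposition~\ref{prop:in_stat_map_Xinit}. The decisive observation is that the forcing block appearing here, $\begin{bmatrix}0\\ \bM\bV_0\end{bmatrix}$, has exactly the same block-sparsity pattern as the first-order input matrix $\boldsymbol{\cB}=\begin{bmatrix}0\\ \bB\end{bmatrix}$ of the second-order realization, merely with $\bB$ replaced by $\bM\bV_0$, and the matrix pencil $\boldsymbol{\cA}-\lambda\boldsymbol{\cE}$ is unchanged. Hence the whole computation reduces to the one carried out in Section~\ref{sec:BT_SO} under the substitution $\bB\mapsto\bM\bV_0$.

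Concretely, first I would write $\boldsymbol{\Gamma}(iw)=(\boldsymbol{\cA}-iw\boldsymbol{\cE})^{-1}=\begin{bmatrix}-iw\bI & \bI\\ -\bK & -\bD-iw\bM\end{bmatrix}^{-1}$ and apply the Schur complement with respect to the $(1,1)$-block $-iw\bI$. A short manipulation shows that the $(1,2)$-block of $\boldsymbol{\Gamma}(iw)$ equals $-\bigl((iw)^2\bM+iw\bD+\bK\bigr)^{-1}$, so that the top block of $\boldsymbol{\Gamma}(iw)\begin{bmatrix}0\\ \bM\bV_0\end{bmatrix}$ is $-\bigl((iw)^2\bM+iw\bD+\bK\bigr)^{-1}\bM\bV_0$. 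Since $\boldsymbol{\cA},\boldsymbol{\cE}$ are real we have $(\boldsymbol{\cA}+iw\boldsymbol{\cE})^{-1}=\boldsymbol{\Gamma}(-iw)$, whose corresponding top block is $-\bigl((iw)^2\bM-iw\bD+\bK\bigr)^{-1}\bM\bV_0$. Multiplying these and reading off the upper-left $n\times n$ block of the integrand defining $\boldsymbol{\cP}$, the two sign factors cancel and I obtain
\[
\bP_1=\int_{\R}\bigl((iw)^2\bM+iw\bD+\bK\bigr)^{-1}\bM\bV_0\,\bV_0^{\T}\bM^{\T}\bigl((iw)^2\bM-iw\bD+\bK\bigr)^{-\T}\mathrm{d}w .
\]
Recognizing the two factors as $\boldsymbol{\cR}_{\Vinit}(iw)$ and $\boldsymbol{\cR}_{\Vinit}(-iw)^{\T}$ from Definition~\ref{def:in_velo_map_Vinit} (with $s=iw$) gives $\bP_1=\int_{\R}\boldsymbol{\cR}_{\Vinit}(iw)\boldsymbol{\cR}_{\Vinit}(-iw)^{\T}\mathrm{d}w=\bP_{\Vinit}$, which is the claim.

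There is no genuine obstacle here: the only care needed is the bookkeeping in the block inversion (the sign on the $(1,2)$-block, which drops out of the quadratic form anyway) and the remark that the integral converges because asymptotic stability of $\boldsymbol{\cA}-\lambda\boldsymbol{\cE}$ forces $\boldsymbol{\Gamma}(iw)$ to decay like $|w|^{-1}$ as $|w|\to\infty$, exactly as in the homogeneous case treated earlier. Indeed, an even shorter route is to invoke the proposition in Section~\ref{sec:BT_SO} directly with $\bB$ replaced by $\bM\bV_0$, since neither the pencil nor the block structure of the forcing term changes.
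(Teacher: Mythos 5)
Your proof is correct and follows essentially the same route as the paper: apply the Schur complement to $\boldsymbol{\Gamma}(iw)$ to identify the relevant block, then read off the upper-left block of the integrand as $\boldsymbol{\cR}_{\Vinit}(iw)\boldsymbol{\cR}_{\Vinit}(-iw)^{\T}$. Your bookkeeping is in fact slightly more careful than the paper's, whose proof writes the block as $-((iw)^2\bM-iw\bD+\bK)^{-1}$ (a sign slip in the $\bD$-term) while arriving at the same final integrand that you obtain with the correct block $-((iw)^2\bM+iw\bD+\bK)^{-1}$.
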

\begin{proof}
	Applying the Schur complement to $\boldsymbol{\Gamma}(iw)$ provides that its upper right part is 
	$
	-((iw)^2\bM -iw\bD + \bK)^{-1}
	$	and hence
	\begin{align*}
		\bP_1 &= \int_{\mathbb{R}} ((iw)^2\bM +iw\bD + \bK)^{-1}\bM\bV_0\\
		&\qquad\qquad\qquad\cdot\bV_0^{\T}\bM^{\T}((iw)^2\bM -iw\bD + \bK)^{-\T}\mathrm{d}w\\
		&=\int_{\mathbb{R}}\boldsymbol{\cR}_{\Vinit}(iw)\boldsymbol{\cR}_{\Vinit}(-iw)^{\T}\mathrm{d}w.
	\end{align*}
\end{proof}

Again the second-order observability Gramian $\bQ_{\Vinit}$ is equal to the one of the homogeneous second-order system $\bQ_{\SO}$ since their state to output mappings coincide.

\section{Model reduction schemes}\label{sec:MOR_Schemes}
In this section, we present two model reduction schemes for the class of systems in \eqref{eq:SO_sys}. The procedures use the tailored Gramians presented in Section \ref{sec:MOR} and construct second-order reduced-order models via balanced truncation as presented in Subsection \ref{sec:BT_SO}.

\begin{algorithm}[tb]
	\caption{BT method for inhomogeneous second-order systems by superposition.}
	\label{algo:SplitProj}
	\begin{algorithmic}[1]
		\Require{The original matrices $\bM$, $\bK$, $\bD$, $\bB$, $\bC$, $\bX_0$, $\bV_0$ and the orders $r_{\star}$, where $\star$ is $\small{\SO}$, $\Xinit,$ or $ \Vinit$ and describes the systems \eqref{eq:SO_homo}, \eqref{eq:SO_state_init} or \eqref{eq:SO_velo_init}.}
		\Ensure{The reduced matrices $\bhM_{\star}$, $\bhK_{\star}$, $\bhD_{\star}$, $\bhB_{\star}$, $\bhC_{\star}$, $\bhX_0$, $\bhV_0$.}
		\State{Compute low-rank factors of the Gramians $\bP_{\star}\approx\bR_{\star}\bR_{\star}^{\T}$ and $\bQ \approx \bS \bS^{\T}$ from Definition \ref{def:SO_mappingsGramians}, \ref{def:in_stat_map_Xinit} and \ref{def:in_velo_map_Vinit}.}
		\State Perform the SVD of $\bS^{\T}\bR_{\star}$, and decompose as \[ \bS^{\T}\bR_{\star}
		= \begin{bmatrix}
		\bU_{\star}^{(1)} & \bU_{\star}^{(2)}
		\end{bmatrix}\diag{\bSigma_{\star}^{(1)}, \bSigma_{\star}^{(2)}}\begin{bmatrix}
		\bX_{\star}^{(1)} & \bX_{\star}^{(2)}
		\end{bmatrix}^{\T},\] 
		with $ \bSigma_{\star}^{(1)} \in \R^{r_{\star}\times r_{\star}}$.
		\State Construct the projection matrices \[ \bW_{\star} = \bS\bU_{\star}^{(1)}(\bSigma_{\star}^{(1)})^{-\frac{1}{2}}~\, \text{and}~\,\bV_{\star} = \bR_{\star}\bX_{\star}^{(1)}(\bSigma_{\star}^{(1)})^{-\frac{1}{2}}.\]
		\State Construct reduced matrices
		\begin{align*}
			\bhM_{\star} &= \bW_{\star}^{\T} \bM_{\star} \bV_{\star},\, \bhD_{\star} = \bW_{\star}^{\T} \bD_{\star} \bV_{\star},\, \bhK_{\star} = \bW_{\star}^{\T} \bK_{\star} \bV_{\star} \\ \bhB_{\star} &= \bW_{\star}^{\T} \bB_{\star}, \, \bhC_{\star} = \bC_{\star} \bV_{\star},\\
			\bhX_0 &= \bW_{\Xinit}^{\T}\bX_0, \, \bhV_0 = \bW_{\bv_0}^{\T}\bV_0.
		\end{align*}
	\end{algorithmic}
\end{algorithm}

\subsection{Method 1: Reducing each subsystem}\label{sec:sep_MOR}
The first method we propose utilizes the superposition properties to reduced the subsystems presented in Section \ref{sec:superpo} separately based on the Gramians presented in Definition \ref{def:SO_mappingsGramians} and in Section \ref{sec:MOR}.

For the homogeneous subsystem \eqref{eq:SO_homo}, we aim to apply the reduction procedure from Subsection \ref{sec:BT_SO} using the Gramians from Definition \ref{def:SO_mappingsGramians} to derive a reduced-order system with the transfer function 
\[\bhH_{\SO}(s)=\bC\bV_{\SO}\left(\bW_{\SO}^{\T}(s^2\bM + s\bD + \bK)\bV_{\SO}\right)^{-1}\bW_{\SO}^{\T}\bB,\]
where $\bW_{\SO}$ and $\bV_{\SO}$ are the corresponding projection matrices given in \eqref{eq:SOprojection_matrices}.

For the subsystem \eqref{eq:SO_state_init} describing the system behavior that results from the state initial condition, we build the corresponding balanced truncation projection matrices $\bW_{\Xinit}$ and $\bV_{\Xinit}$ as in Equation \eqref{eq:SOprojection_matrices} based on the Gramians presented in Subsection \ref{sec:SO_Gramian2}.
We reduce system \eqref{eq:SO_state_init} accordingly and obtain the reduced position initial condition transfer function
\begin{multline*}
	\bhH_{\Xinit}(s)=\bC\bV_{\Xinit}\left(\bW_{\Xinit}^{\T}(s^2\bM + s\bD + \bK)\bV_{\Xinit}\right)^{-1}\\ 
	\cdot \bW_{\Xinit}^{\T}(\bD + s\bM)\bV_{\Xinit}\bW_{\Xinit}^{\T}\bX_0.
\end{multline*}

Applying second-order balanced truncation to system \eqref{eq:SO_velo_init} using the second-order Gramians from Subsection \ref{sec:SO_Gramian1} provides the projection matrices $\bW_{\Vinit}$ and $\bV_{\Vinit}$ from Equation \eqref{eq:SOprojection_matrices}.
Reducing the system accordingly generates the corresponding reduced transfer function 
\begin{multline*}
\bhH_{\Vinit}(s)=\bC\bV_{\Vinit}\left(\bW_{\Vinit}^{\T}(s^2\bM + s\bD + \bK)\bV_{\Vinit}\right)^{-1}\\
\cdot\bW_{\Vinit}^{\T}\bM\bV_{\Vinit}\bW_{\Vinit}^{\T}\bV_0
\end{multline*}
that describes the velocity initial condition to output behavior.

Summarizing, we apply balanced truncation to the three systems to generate the corresponding reduced transfer functions  $\bhH_{\SO}$, $\bhH_{\Xinit}$ and $\bhH_{\Vinit}$ associated with the outputs  $\bhy_{\SO}(t), \bhy_{\Xinit}(t)$, and $\bhy_{\Vinit}(t)$, respectively, such that the overall behavior \[\bhy =\bhy_{\SO}(t) + \bhy_{\Xinit}(t) + \bhy_{\Vinit}(t)\] approximates the original output $\by(t)$.  The detailed reduction procedure for each subsystem is given in Algorithm \ref{algo:SplitProj}.

\subsection{Method 2: Combined Gramians}\label{sec:combGram}
We have discussed the approach where we use separated projections for each subsystem.
However, for some applications it might be advantageous to have only one projection that reduces the original system including the initial conditions at once.
That means that we need to determine a projection based on a controllability space which corresponds to the input and the initial conditions.
This controllability space is spanned by the columns of the sum of the controllability Gramians introduced in the previous sections
\begin{align}\label{eq:Gramian_combined}
	\begin{split}
		\bP_{\rmc} &= \bP_{\SO} +\bP_{\Xinit} + \bP_{\Vinit} =\begin{bmatrix}
			\bR_{\SO} & \bR_{\Xinit} & \bR_{\Vinit}
		\end{bmatrix}\begin{bmatrix}
		\bR_{\SO}^{\mathrm{T}} \\ \bR_{\Xinit}^{\mathrm{T}} \\ \bR_{\Vinit}^{\mathrm{T}}
	\end{bmatrix},\\
	\bQ_{\rmc} & =\bQ_{\SO} =\bQ_{\Xinit} =\bQ_{\Vinit} = \bS_{\SO}\bS_{\SO}^{\mathrm{T}}
\end{split}
\end{align}
where $\bR_{\SO}^{\mathrm{T}}$, $\bR_{\Xinit}^{\mathrm{T}}$, $\bR_{\Vinit}^{\mathrm{T}}$ and $\bS_{\SO}$ are the corresponding low-rank factors of $\bP_{\SO}^{\mathrm{T}}$, $\bP_{\Xinit}^{\mathrm{T}}$, $\bP_{\Vinit}^{\mathrm{T}}$ and $\bQ_{\SO}$.
Applying balanced truncation for second-order systems based on the low-rank factors of the combined Gramains $\bP_{\rmc}$ and $\bQ_{\rmc}$ from equation \eqref{eq:Gramian_combined} results in a reduced-order system that takes into account the input to state and the initial conditions to state mappings.

Another approach that results in the same controllability Gramian $\bP_{\rmc}$ and therefore the same reduced-order system would be a modification of the method presented in \cite{morHeiRA11} for second-order systems.
Therefore, we consider the homogeneous first order system \eqref{eq:FO_sys} with the input matrix
\begin{equation}\label{eq:combB}
	\boldsymbol{\cB} = \begin{bmatrix}
		0 & \bX_0 & 0 \\
		\bB & 0 & \bM\bV_0
	\end{bmatrix}.
\end{equation}
\begin{proposition}
	The position controllability Gramian of the first order system \eqref{eq:FO_sys} with $\boldsymbol{\cB}$ as in equation \eqref{eq:combB} is equal to the Gramian $\bP_{\rmc}$ in equation \eqref{eq:Gramian_combined}.
\end{proposition}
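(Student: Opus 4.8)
The plan is to exploit linearity of the Gramian integral together with the three propositions already established for the individual subsystems. First I would write the combined input matrix from \eqref{eq:combB} in block-column form, $\boldsymbol{\cB} = \left[\, \boldsymbol{\cB}_1\;\; \boldsymbol{\cB}_2\;\; \boldsymbol{\cB}_3 \,\right]$, with
\[
\boldsymbol{\cB}_1 = \begin{bmatrix} 0\\ \bB\end{bmatrix}, \qquad
\boldsymbol{\cB}_2 = \begin{bmatrix} \bX_0\\ 0\end{bmatrix}, \qquad
\boldsymbol{\cB}_3 = \begin{bmatrix} 0\\ \bM\bV_0\end{bmatrix}.
\]
Since, for any matrix partitioned into block columns, the product with its transpose is the sum of the outer products of the blocks, we have $\boldsymbol{\cB}\boldsymbol{\cB}^{\T} = \boldsymbol{\cB}_1\boldsymbol{\cB}_1^{\T} + \boldsymbol{\cB}_2\boldsymbol{\cB}_2^{\T} + \boldsymbol{\cB}_3\boldsymbol{\cB}_3^{\T}$; in particular no cross terms appear.

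Next I would substitute this identity into the defining integral for the first-order controllability Gramian,
\[
\boldsymbol{\cP} = \int_{\R} \boldsymbol{\Gamma}(iw)\,\boldsymbol{\cB}\boldsymbol{\cB}^{\T}\,\boldsymbol{\Gamma}(iw)^{\T}\,\mathrm{d}w, \qquad \boldsymbol{\Gamma}(iw) := (\boldsymbol{\cA}-iw\boldsymbol{\cE})^{-1},
\]
and use linearity of the integral to split $\boldsymbol{\cP} = \boldsymbol{\cP}^{(1)} + \boldsymbol{\cP}^{(2)} + \boldsymbol{\cP}^{(3)}$, where $\boldsymbol{\cP}^{(j)}$ is precisely the first-order controllability Gramian built from the single input block $\boldsymbol{\cB}_j$. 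Because extracting the upper-left $n\times n$ block is a linear operation, it commutes with this sum, so the position controllability Gramian of the combined first-order system equals the sum of the upper-left blocks of $\boldsymbol{\cP}^{(1)}$, $\boldsymbol{\cP}^{(2)}$ and $\boldsymbol{\cP}^{(3)}$.

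Finally I would identify each summand using the propositions already proved: the upper-left block of $\boldsymbol{\cP}^{(1)}$, associated with $\boldsymbol{\cB}_1$, is $\bP_{\SO}$ by the first proposition of Subsection~\ref{sec:BT_SO}; the upper-left block of $\boldsymbol{\cP}^{(2)}$, associated with $\boldsymbol{\cB}_2$, is $\bP_{\Xinit}$ by Proposition~\ref{prop:in_stat_map_Xinit}; and the upper-left block of $\boldsymbol{\cP}^{(3)}$, associated with $\boldsymbol{\cB}_3$, is $\bP_{\Vinit}$ by the proposition of Subsection~\ref{sec:SO_Gramian1}. Summing these yields $\bP_1 = \bP_{\SO} + \bP_{\Xinit} + \bP_{\Vinit} = \bP_{\rmc}$, which is the claim. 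I expect no real obstacle: the only point needing a moment's care is checking that $\boldsymbol{\cB}\boldsymbol{\cB}^{\T}$ splits as a clean sum with no cross terms and that block extraction commutes with summation, after which the three earlier propositions do all the work.
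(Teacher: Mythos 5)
Your proposal is correct and follows essentially the same route as the paper: the paper's proof likewise splits the combined $\boldsymbol{\cB}\boldsymbol{\cB}^{\T}$ into its three block contributions inside the Gramian integral and identifies the upper-left block of each with $\bP_{\SO}$, $\bP_{\Xinit}$ and $\bP_{\Vinit}$, merely re-deriving the Schur-complement blocks of $\boldsymbol{\Gamma}(iw)$ explicitly instead of citing the earlier propositions as you do. The only cosmetic slip is that the second factor in the integrand should be $\boldsymbol{\Gamma}(-iw)^{\T}=(\boldsymbol{\cA}+iw\boldsymbol{\cE})^{-\T}$ rather than $\boldsymbol{\Gamma}(iw)^{\T}$, which does not affect the linearity argument.
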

\begin{proof}
	The position controllability Gramian of system \eqref{eq:SO_sys} is described by the upper left part $\bP_1$ of
	\begin{align*}
		\boldsymbol{\cP} &= \begin{bmatrix}
			\bP_1 & \bP_2 \\
			\bP_2^{\T} & \bP_3
		\end{bmatrix}\\
		&=\int_{\R}\underbrace{(\boldsymbol{\cA}-iw\boldsymbol{\cE})^{-1}}_{:= \boldsymbol{\Gamma}(i w)}\begin{bmatrix}
			0 &\bX_0 &0\\
			\bB &0 & \bM\bV_0
		\end{bmatrix}\boldsymbol{\cB}^{\T}(\boldsymbol{\cA}+iw\boldsymbol{\cE})^{-\T}\mathrm{d}w.
	\end{align*}
	Applying the Schur complement to $\boldsymbol{\Gamma}(iw)$ provides that the upper left block is $-((iw)^2\bM +iw\bD + \bK)^{-1}(iw\bM + \bD)$ and the upper right block is $-((iw)^2\bM +iw\bD + \bK)^{-1}$.
	It follows that 
	\begin{align*}
		\bP_1 &= \int_{\R}((iw)^2\bM +iw\bD + \bK)^{-1}\bB\\
		&\hspace*{95pt}\cdot\bB^{\T}((iw)^2\bM -iw\bD + \bK)^{-\T}\\
		&+\int_{\R}((iw)^2\bM +iw\bD + \bK)^{-1}(iw\bM + \bD)\bX_0\\
		&\hspace{25pt}\cdot\bX_0^{\T}(-iw\bM + \bD)^{-\T}((iw)^2\bM -iw\bD + \bK)^{-\T}\\
		&+\int_{\R}((iw)^2\bM +iw\bD + \bK)^{-1}\bM\bV_0\\
		&\hspace{79pt}\cdot\bV_0^{\T}\bM^{\T}((iw)^2\bM -iw\bD + \bK)^{-\T}\\
		&= \bP_{\SO} +\bP_{\Xinit} + \bP_{\Vinit}.
	\end{align*}
\end{proof}

The final method is presented in Algorithm \ref{algo:CombProj} and generates the reduced transfer function 
\[\bhH(s)=\bC\bV_{\rmc}\left(\bW_{\rmc}^{\T}(s^2\bM + s\bD + \bK)\bV_{\rmc}\right)^{-1}\bW_{\rmc}^{\T}\bB.\]
The advantage of this approach is the fact that we obtain only one second-order reduced-order model approximating the behavior of the original system. The disadvantage of this method is the inflexibility of the different controllability space dimensions.
It follows, that the combined Gramian leads possibly to reduced dimensions significantly larger than the dimensions of the separately reduced systems to reach the same approximation quality.

\begin{algorithm}[tb]
	\caption{BT method for inhomogeneous second-order systems by combined Gramians.}
	\label{algo:CombProj}
	\begin{algorithmic}[1]
		\Require{The original matrices $\bM$, $\bK$, $\bD$, $\bB$, $\bC$, $\bX_0$, $\bV_0$ and the order $r$.}
		\Ensure{The reduced matrices $\bhM$, $\bhK$, $\bhD$, $\bhB$, $\bhC$, $\bhX_0$, $\bhV_0$}.
		\State Build the input matrix 
		\[
		\boldsymbol{\cB} = \begin{bmatrix}
		0 & \bX_0 & 0 \\
		\bB & 0 & \bM\bV_0
		\end{bmatrix}.
		\]
		\State{Compute low factors of Gramians $\bP\approx\bR\bR^{\T}$ from \eqref{eq:Gramian_combined} and $\bQ \approx \bS \bS^{\T}$ from Definition \ref{def:SO_mappingsGramians}.}
		\State Perform the SVD of $\bS^{\T}\bR$, and decompose as \[ \bS^{\T}\bR
		= \begin{bmatrix}
		\bU^{(1)} & \bU^{(2)}
		\end{bmatrix}\diag{\bSigma^{(1)}, \bSigma^{(2)}}\begin{bmatrix}
		\bX^{(1)} & \bX^{(2)}
		\end{bmatrix}^{\T},\] 
		with $ \bSigma^{(1)} \in \R^{r\times r}$.
		\State Construct the projection matrices \[ \bW_{\rmc} = \bS\bU^{(1)}(\bSigma^{(1)})^{-\frac{1}{2}}~\, \text{and}~\,\bV_{\rmc} = \bR\bX^{(1)}(\bSigma^{(1)})^{-\frac{1}{2}}.\]
		\State Construct reduced matrices
		\begin{align*}
			\bhM &= \bW_{\rmc}^{\T} \bM \bV_{\rmc},\, \bhD = \bW_{\rmc}^{\T} \bD \bV_{\rmc},\, \bhK = \bW_{\rmc}^{\T} \bK \bV_{\rmc} \\ \bhB &= \bW_{\rmc}^{\T} \bB, \, \bhC = \bC \bV_{\rmc},\,\bhX_0 = \bW_{\rmc}^{\T}\bX_0, \, \bhV_0 = \bW_{\rmc}^{\T}\bV_0.
		\end{align*}
	\end{algorithmic}
\end{algorithm}

\section{Error bounds}\label{sec:ErrEst}
In this section we develop a posteriori error bounds for the methods of this article.
Therefore, we use the fact that
\[ \|\by\|_{L_2} = \|\bh \ast \bu\|_{L_2} = \|\bH  \bU\|_{\cH_2} \leq \|\bH\|_{\cH_2}\|\bU\|_{\cH_{\infty}}  \]
where $\bU = \cL\{\bu\}$ and $\bh(t):=\boldsymbol{\cC}\exp(\boldsymbol{\cE}^{-1}\boldsymbol{\cA} t)\boldsymbol{\cE}^{-1}\boldsymbol{\cB}$.

Firstly, we use the above inequality to find  a posteriori error bounds for the reduction scheme presented in Subsection \ref{sec:sep_MOR}. As a consequence, a possible error bound for the reduced subsystems approximation is 
\begin{align*}
	\|\by- \bhy \|_{L_{2}} \leq \|\bH_{\SO} &-\bhH_{\SO}\|_{\cH_{2}}\|\cL(\bu)\|_{\cH_{\infty}}
	\\
	&+\|\bH_{\Xinit}-\bhH_{\Xinit}\|_{\cH_2}\|\bz_0\|_2 \\
	&\qquad+\|\bH_{\Vinit}-\bhH_{\Vinit}\|_{\cH_2}\|\bw_0\|_2.
\end{align*}
Using the $\cH_2$ norm has the advantage of less computational costs. However, one needs to have $\bu \in \cH_{\infty}$, which applies some restrictions to the family of inputs $\bu$ because $\bu\in\cH_{\infty}$ is a stronger condition than $ \bu\in\cH_2$.

We compute the $\cH_2$ norm of the difference between the transfer function $\bH(s) := \boldsymbol{\cC}\left( \boldsymbol{\cA} -s\boldsymbol{\cE} \right)^{-1}\boldsymbol{\cB}$ and the reduced transfer function $\hat{\bH}(s) := \boldsymbol{\hat{\cC}}\left( \boldsymbol{\hat{\cA}} -s\boldsymbol{\hat{\cE}} \right)^{-1}\boldsymbol{\hat{\cB}}$ in the following way

\begin{align}\label{eq:H2computation}
&\|\bH - \hat{\bH}\|_{\cH_2}^2\nonumber\\
		&=\int_{0}^{\infty}\trace{\left(\bH(s)-\hat{\bH}(s)\right)^{\HH}\left(\bH(s)-\hat{\bH}(s)\right)}\mathrm{d}s\nonumber\\
		&=\int_{0}^{\infty}\trace{\bH(s)^{\HH}\bH(s)}\mathrm{d}s 
		-2\int_{0}^{\infty}\trace{\bH(s)^{\HH}\hat{\bH}(s)}\mathrm{d}s\nonumber \\ 
		&\qquad\qquad\qquad\qquad\quad\qquad\quad+ \int_{0}^{\infty}\trace{\hat{\bH}(s)^{\HH}\hat{\bH}(s)}\mathrm{d}s\nonumber\\
		&=\trace{\boldsymbol{\cC}\boldsymbol{\cP}\boldsymbol{\cC}^{\T}} -2\trace{\boldsymbol{\cC}\boldsymbol{\tilde{\cP}}\boldsymbol{\hat{\cC}}^{\T}} + \trace{\boldsymbol{\hat{\cC}}\boldsymbol{\hat{\cP}}\boldsymbol{\hat{\cC}}^{\T}},
\end{align}

where $\boldsymbol{\cP}$ and $\boldsymbol{\hat{\cP}}$ are the controllability Gramians of the original first-order system and the reduced first-order system.
The cross Gramian $\boldsymbol{\tilde{\cP}}$ solves the Sylvester equation
\[
\boldsymbol{\cA}\boldsymbol{\tilde{\cP}}\boldsymbol{\hat{\cE}}^{\T} + \boldsymbol{\cE}\boldsymbol{\tilde{\cP}}\boldsymbol{\hat{\cA}}^{\T} = - \boldsymbol{\cB}\boldsymbol{\hat{\cB}}^{\T}.
\]
The controllability Gramians $\boldsymbol{\cP}$ of the full system needs to be computed anyway to apply balanced truncation and the reduced Gramians $\boldsymbol{\tilde{\cP}}$ and $\boldsymbol{\hat{\cP}}$ are cheap to compute.


The error estimation for the combined Gramian and the resulting reduced-order system is equal to the error estimate above, where the projection matrices that lead to the reduced system are the same for the three subsystems.
On the other hand one can evaluate 
\begin{align*}
	\|\by- \bhy \|_{L_{2}} 
	&\leq \|\bH-\bhH\|_{\cH_{2}}\|\cL(\bu_{\rmc})\|_{\cH_{\infty}}\\
	&=  \|\bH-\bhH\|_{\cH_{2}}\left(\|\cL(\bu)\|_{\cH_{\infty}}+\|\bz_0\|_2 + \|\bw_0\|_2\right)
\end{align*}
for the matrix $\boldsymbol{\cB}$ as in \eqref{eq:combB} and $\bu_{\rmc}:=\footnotesize
\left[\hspace*{-3pt} \begin{array}{c}
\bu(t)\\
\bz_0\delta(t)\\
\bw_0\delta(t) \end{array}\hspace*{-3pt} \right]
$.
This estimation can be computed using the equation \eqref{eq:H2computation}.

\section{Numerical results}\label{sec:NumRes}
In this section, we illustrate the procedure presented in this article using two different examples.
The first example is a vibrational model of a building and the second one a mass spring damper system. 
We evaluate and compare for each example three reduced systems.
We obtain the first one by applying balanced truncation to the full system \eqref{eq:SO_sys} that does not consider the initial conditions.
Hence, the projection matrices result from the evaluation of the Gramians of the homogeneous system \eqref{eq:SO_homo}.
The second reduced system is obtained by reducing the three subsystems, separately, as presented in this article.
The third method uses the combined Gramian presented in Section \ref{sec:combGram} to obtain the reduced-order model.
For all reduced systems, we evaluate the output behavior and the corresponding output error.

The computations were done on a computer with 4 Intel\textsuperscript{\textregistered} Core\texttrademark  i5-4690 CPUs running at 3.5 GHz.
The experiments use \matlab R2017a.
In the second example, the Lyapunov equations are solved using the methods from the M-M.E.S.S. toolbox \cite{Saak2021}.

We will refer to the original system \eqref{eq:SO_sys} as \texttt{FOM}, in the following, and to the reduced system generated by standard balanced truncation that considers homogeneous systems, i.e, by applying second-order balanced truncation as described in Subsection \ref{sec:BT_SO} by \texttt{ROM{\_}HOM}. The reduced system approximation that is obtained by applying method 1 introduced in Subsection \ref{sec:sep_MOR} is referred to as \texttt{ROM{\_}SPL} and the reduced system that is generated by applying method 2 introduced in Subsection \ref{sec:combGram} as \texttt{ROM{\_}COM}.

\subsection{Building example}
In this section we consider the building example from page 17 of the technical report \cite{morAntSG01}.
The data are available in \cite{SLCbasic}.
The dimension of the matrices are $n=24,\, m = p = 1$.
For the projection matrix $\bW_{\SO}$ that results from the balanced truncation procedure for the homogeneous second-order system \eqref{eq:SO_homo} we consider the singular value decomposition 
\[
\bU\boldsymbol{\Sigma} \bX^{\T} = \bW_{\SO}.
\]
Assume that $\mathrm{rank}(\bW)=\ell$. The position and velocity initial condition are the $(\ell +1)$-st column of $\bU$:
\[
\bX_0 = \Xinit = \bV_0 = \bdx_0 = \bU[\, : \, , \, \ell+1 \, ].
\]
In this example, the separately reduced systems and the combined reduced system are truncated with a reduced dimension $r=10$.
Figure \ref{pic:build_output} shows the output behavior of the original system and the reduced ones for an input $\bu(t) = 0.2\cdot e^{-t}$. 
We observe that the original output behavior that is depicted in green is well approximated by the separately reduced system \texttt{ROM{\_}SPL} that is depicted by the blue, dashed line.
The reduced system \texttt{ROM{\_}COM} using the combined Gramian (depicted by the orange colored, dashed line) provides a proper approximation of the original output as well.
Additionally, we see that the reduced output of the reduced system \texttt{ROM{\_}HOM}, which is depicted in red, fails in approximating the original system's transient behavior.

Figure \ref{pic:build_error} depicts the errors and their $\ell_2$-norms.
The light blue line with markers depicts the error of the separately reduced system \texttt{ROM{\_}SPL} and the dashed, brown colored line the error of the reduced system \texttt{ROM{\_}COM} using the combined Gramian.
The reduced system \texttt{ROM{\_}HOM} leads to the error depicted by the dashed, orange colored line.
We observe, that the separately reduced system and the reduced system that uses the combined Gramain lead to errors that are significantly smaller than the error corresponding to the reduced system \texttt{ROM{\_}HOM}.
Additionally, we evaluate the actual $\ell_2$-norm error.
Therefore, we plot the integral
\begin{equation}\label{eq:l2_integral}
	\sqrt{\int_{0}^{t}\|\by(t) - \bhy(t)\|^2 \mathrm{d}t}
\end{equation}
that converges to the $\ell_2$-norm of the error.
The dark blue, dashed line with markers is the integral \eqref{eq:l2_integral} converging to the actual $\ell_2$-norm error of the separately reduced system \texttt{ROM{\_}SPL}.
The error bound from Section \ref{sec:ErrEst} provides a value of $3.2740\cdot 10^{-5}$ (depicted by the black line).
We see that this error estimator provides a proper upper bound of the actual $\ell_2$-norm error.
The green line with markers provides the integral \eqref{eq:l2_integral} corresponding to the combined Gramian reduced system \texttt{ROM{\_}COM} and its error estimation $1.5469\cdot 10^{-4}$ is depicted by the dashed, black line.
The red line shows the integral \eqref{eq:l2_integral} of the reduced system \texttt{ROM{\_}HOM}.
It confirms again, that this method fails for this example.
\begin{figure}\label{pic:build}
	\centering
	\newlength\fheight 
	\newlength\fwidth 
	\setlength{\fwidth}{0.9\textwidth}
	\setlength{\fheight}{3.5cm}
	\subfloat[Build - Output]{%
		\resizebox*{7cm}{!}{\input{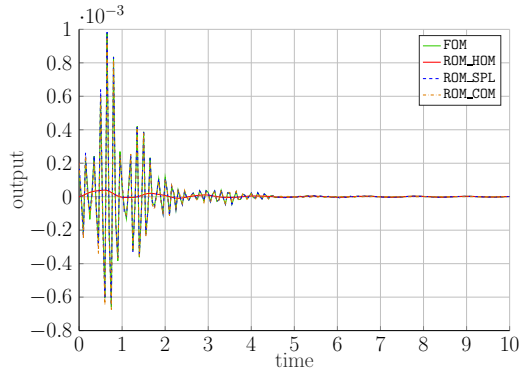} \label{pic:build_output}}}\vspace{20pt}
	\subfloat[Build - Error]{%
		\resizebox*{7cm}{!}{\input{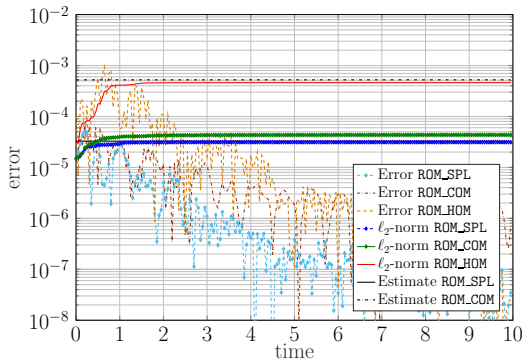}\label{pic:build_error}}}
	\caption{Build example}
\end{figure}

\subsection{Mass spring damper example}
The mass spring damper model we consider in this section is presented in \cite{morWiki_msd}.
More detailed background can be found in \cite{morKawS15}.

We choose the model of dimensions $n=2000$, $m=p=1$.
The input is the external forcing on the $n$-th mass and the output observes the $n$-th mass.

The initial conditions are set to be the last and the first unit vector 
\[
\bX_0 = \Xinit := e_n, \qquad \bV_0 = \dot{\bx}_0 := e_1.
\]

In this example, we truncate the systems with a tolerance of $10^{-4}$, i.e. all Hankel singular values smaller than $10^{-4}\cdot\sigma_1$ are truncated.
That way, we obtain reduced systems of dimensions $147,\, 180,\, 98$ of the three systems resulting from the superposition method and the dimension $157$ for the system reduced using the combined Gramians.

Figure \ref{pic:msd_output} shows the output behavior of the systems for the input $\bu(t) = 0.2\cdot e^{-t}$.
The output behavior of the original system is depicted in green.
The blue, dashed line displays the output composed by the separately reduced systems \texttt{ROM{\_}SPL} and the orange colored, dashed line the reduced system \texttt{ROM{\_}COM} using the combined Gramian.
The reduced output resulting from the reduced system \texttt{ROM{\_}HOM} is depicted in red.
We observe that all outputs approximate the original system behavior. although \texttt{ROM{\_}HOM} shows oscillations of slightly higher magnitude than the \texttt{FOM} for some time.

The output errors and their $\ell_2$-norms are illustrated in Figure \ref{pic:msd_error}.
The light blue line with markers, the brown colored, dashed line and the orange colored, dashed line show the error of the separately reduced outputs, the output corresponding to the combined Gramian and the output resulting form the reduced system \texttt{ROM{\_}HOM}, respectively.
We observe again that the separately reduced system \texttt{ROM{\_}SPL} and the reduced system \texttt{ROM{\_}COM} using the combined Gramian lead to smaller errors.
Additionally, we evaluate the actual $\ell_2$-norm error and plot the integral \eqref{eq:l2_integral}
that converges to the $\ell_2$-norm of the error.
The dark blue, dashed line with markers shows the integral \eqref{eq:l2_integral} for the separately reduced system \texttt{ROM{\_}SPL} and the green one the integral for the reduced system \texttt{ROM{\_}COM} using the combined Gramian.
The error estimator from Section \ref{sec:ErrEst} provides $\ell_2$ error estimation values of $7.5490\cdot 10^{-3}$ and $3.1922\cdot 10^{-2}$ for this example.
It is depicted in Figure \ref{pic:msd_error} by the black and black, dashed lines.
We observe that the error estimations are conservative.
The integral \eqref{eq:l2_integral} of the reduced system \texttt{ROM{\_}HOM} is depicted in red. It converges to a $\ell_2$ error that is larger than for the first two reduction methods.

\begin{figure}\label{pic:msd}
	\centering
	\subfloat[Mass-spring-damping - Output]{%
		\resizebox*{6cm}{!}{\input{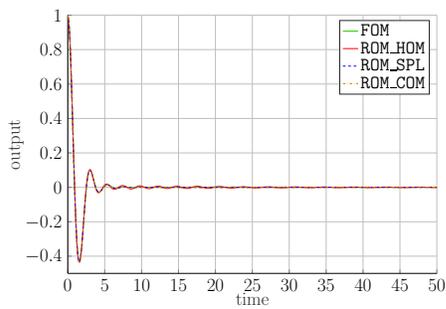} \label{pic:msd_output}}}\vspace{20pt}
	\subfloat[Mass-spring-damping - Error]{%
		\resizebox*{6cm}{!}{\input{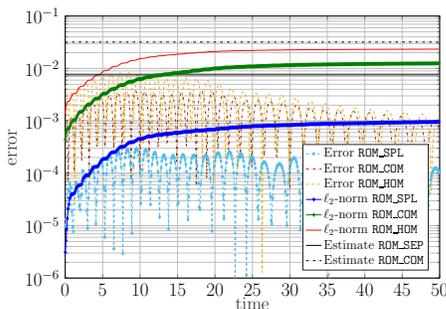}\label{pic:msd_error}}}
	\caption{Mass-spring-damping example}
\end{figure}

\section{Conclusion}
We have proposed two approaches for constructing a reduction of second-order linear time-invariant systems with inhomogeneous initial conditions.
First, we have used a superposition of the output into the input to output mapping, the state initial condition to output mapping and the velocity initial condition to output mapping.
The three subsystems have been reduced, separately, such that the original system can be approximated well.
Afterward, a combined Gramian has been used to derive projection matrices that reduce the system, including the initial conditions, all at once.
For those reduction processes we have suggested new Gramians for inhomogeneous second-order systems.


\addcontentsline{toc}{section}{References}
\bibliographystyle{plainurl}
\bibliography{References,mor,csc}

\begin{thebibliography}{10}

\bibitem{morAntSG01}
A.~C. Antoulas, D.~C. Sorensen, and S.~Gugercin.
\newblock A survey of model reduction methods for large-scale systems.
\newblock {\em Contemp. Math.}, 280:193--219, 2001.

\bibitem{morBauBF14}
U.~Baur, P.~Benner, and L.~Feng.
\newblock Model order reduction for linear and nonlinear systems: A
  system-theoretic perspective.
\newblock {\em Arch. Comput. Methods Eng.}, 21(4):331--358, 2014.
\newblock \href {http://dx.doi.org/10.1007/s11831-014-9111-2}
  {\path{doi:10.1007/s11831-014-9111-2}}.

\bibitem{morBeaGM17}
C.~Beattie, S.~Gugercin, and V.~Mehrmann.
\newblock Model reduction for systems with inhomogeneous initial conditions.
\newblock {\em Systems Control Lett.}, 99:99--106, 2017.
\newblock \href {http://dx.doi.org/10.1016/j.sysconle.2016.11.007}
  {\path{doi:10.1016/j.sysconle.2016.11.007}}.

\bibitem{morBeaG05}
C.~A. Beattie and S.~Gugercin.
\newblock {K}rylov-based model reduction of second-order systems with
  proportional damping.
\newblock In {\em Proceedings of the 44th IEEE Conference on Decision and
  Control}, pages 2278--2283, December 2005.
\newblock \href {http://dx.doi.org/10.1109/CDC.2005.1582501}
  {\path{doi:10.1109/CDC.2005.1582501}}.

\bibitem{morBeaG09}
C.~A. Beattie and S.~Gugercin.
\newblock Interpolatory projection methods for structure-preserving model
  reduction.
\newblock {\em Systems Control Lett.}, 58(3):225--232, 2009.
\newblock \href {http://dx.doi.org/10.1016/j.sysconle.2008.10.016}
  {\path{doi:10.1016/j.sysconle.2008.10.016}}.

\bibitem{morBenB17}
P.~Benner and T.~Breiten.
\newblock Model order reduction based on system balancing.
\newblock In P.~Benner, A.~Cohen, M.~Ohlberger, and K.~Willcox, editors, {\em
  Model Reduction and Approximation}, Computational Science \& Engineering,
  pages 261--295. SIAM, Philadelphia, PA, 2017.
\newblock \href {http://dx.doi.org/10.1137/1.9781611974829.ch6}
  {\path{doi:10.1137/1.9781611974829.ch6}}.

\bibitem{morBenGP19}
P.~Benner, P.~Goyal, and I.~Pontes~Duff.
\newblock Identification of dominant subspaces for linear structured parametric
  systems and model reduction.
\newblock e-prints 1910.13945, arXiv, 2019.
\newblock math.NA.
\newblock URL: \url{https://arxiv.org/abs/1910.13945}.

\bibitem{morPonGB20a}
P.~Benner, P.~Goyal, and I.~Pontes~Duff.
\newblock Data-driven identification of {R}ayleigh-damped second-order systems.
\newblock In {\em Realization and Model Reduction of Dynamical Systems -- A
  Festschrift in Honor of the 70th Birthday of Thanos Antoulas}. Springer,
  2020.
\newblock accepted April 2020.
\newblock URL: \url{https://arxiv.org/abs/1910.00838}.

\bibitem{morBenOCetal17}
P.~Benner, M.~Ohlberger, A.~Cohen, and K.~Willcox, editors.
\newblock {\em Model Reduction and Approximation: Theory and Algorithms}.
\newblock Computational Science \& Engineering. Society for Industrial and
  Applied Mathematics, Philadelphia, PA, 2017.
\newblock \href {http://dx.doi.org/10.1137/1.9781611974829}
  {\path{doi:10.1137/1.9781611974829}}.

\bibitem{morCaoBPetal20}
X.~Cao, P.~Benner, I.~Pontes~Duff, and W.~Schilders.
\newblock Model order reduction for bilinear control systems with inhomogeneous
  initial conditions.
\newblock {\em Internat. J. Control}, 2020.
\newblock \href {http://dx.doi.org/10.1080/00207179.2020.1740945}
  {\path{doi:10.1080/00207179.2020.1740945}}.

\bibitem{morChaGVetal05}
V.~Chahlaoui, K.~A. Gallivan, A.~Vandendorpe, and P.~{Van Dooren}.
\newblock Model reduction of second-order systems.
\newblock In P.~Benner, V.~Mehrmann, and D.~C. Sorensen, editors, {\em
  Dimension Reduction of Large-Scale Systems}, volume~45 of {\em Lect. Notes
  Comput. Sci. Eng.}, pages 149--172. Springer-Verlag, Berlin Heidelberg,
  Germany, 2005.
\newblock \href {http://dx.doi.org/10.1007/3-540-27909-1\_6}
  {\path{doi:10.1007/3-540-27909-1\_6}}.

\bibitem{morChaLVetal06}
Y.~Chahlaoui, D.~Lemonnier, A.~Vandendorpe, and P.~Van~Dooren.
\newblock Second-order balanced truncation.
\newblock {\em Linear Algebra Appl.}, 415(2--3):373--384, 2006.
\newblock \href {http://dx.doi.org/10.1016/j.laa.2004.03.032}
  {\path{doi:10.1016/j.laa.2004.03.032}}.

\bibitem{morFlaBG12}
G.~Flagg, C.~A. Beattie, and S.~Gugercin.
\newblock Convergence of the iterative rational {K}rylov algorithm.
\newblock {\em Systems Control Lett.}, 61(6):688--691, 2012.
\newblock \href {http://dx.doi.org/10.1016/j.sysconle.2012.03.005}
  {\path{doi:10.1016/j.sysconle.2012.03.005}}.

\bibitem{morFre05}
R.~W. Freund.
\newblock {P}ad{\'e}-type model reduction of second-order and higher-order
  linear dynamical systems.
\newblock In P.~Benner, V.~Mehrmann, and D.~C. Sorensen, editors, {\em
  Dimension Reduction of Large-Scale Systems}, volume~45 of {\em Lect. Notes
  Comput. Sci. Eng.}, pages 173--189. Springer-Verlag, Berlin\textbackslash
  Heidelberg, Germany, 2005.
\newblock \href {http://dx.doi.org/10.1007/3-540-27909-1\_8}
  {\path{doi:10.1007/3-540-27909-1\_8}}.

\bibitem{morGlo84}
K.~Glover.
\newblock All optimal {H}ankel-norm approximations of linear multivariable
  systems and their {L}$^\infty$-error norms.
\newblock {\em Internat. J. Control}, 39(6):1115--1193, 1984.
\newblock \href {http://dx.doi.org/10.1080/00207178408933239}
  {\path{doi:10.1080/00207178408933239}}.

\bibitem{morGugSW13}
S.~Gugercin, T.~Stykel, and S.~Wyatt.
\newblock Model reduction of descriptor systems by interpolatory projection
  methods.
\newblock {\em {SIAM} J. Sci. Comput.}, 35(5):B1010--B1033, 2013.
\newblock \href {http://dx.doi.org/10.1137/130906635}
  {\path{doi:10.1137/130906635}}.

\bibitem{morHeiRA11}
M.~Heinkenschloss, T.~Reis, and A.~C. Antoulas.
\newblock Balanced truncation model reduction for systems with inhomogeneous
  initial conditions.
\newblock {\em Automatica J. IFAC}, 47(3):559--564, 2011.
\newblock \href {http://dx.doi.org/10.1016/j.automatica.2010.12.002}
  {\path{doi:10.1016/j.automatica.2010.12.002}}.

\bibitem{morKawS15}
Y.~Kawano and J.M.A. Scherpen.
\newblock Model reduction by generalized differential balancing.
\newblock In {\em Mathematical Control Theory I: Nonlinear and Hybrid Control
  Systems}, volume 461 of {\em Lecture Notes in Control and Information
  Sciences}, pages 349--362, 2015.
\newblock \href {http://dx.doi.org/10.1007/978-3-319-20988-3}
  {\path{doi:10.1007/978-3-319-20988-3}}.

\bibitem{morMayA07}
A.~J. Mayo and A.~C. Antoulas.
\newblock A framework for the solution of the generalized realization problem.
\newblock {\em Linear Algebra Appl.}, 425(2--3):634--662, 2007.
\newblock Special Issue in honor of P.~A. Fuhrmann, Edited by A.~C. Antoulas,
  U. Helmke, J. Rosenthal, V. Vinnikov, and E. Zerz.
\newblock \href {http://dx.doi.org/10.1016/j.laa.2007.03.008}
  {\path{doi:10.1016/j.laa.2007.03.008}}.

\bibitem{morMoo81}
B.~C. Moore.
\newblock Principal component analysis in linear systems: controllability,
  observability, and model reduction.
\newblock {\em {IEEE} Trans. Autom. Control}, AC--26(1):17--32, 1981.
\newblock \href {http://dx.doi.org/10.1109/TAC.1981.1102568}
  {\path{doi:10.1109/TAC.1981.1102568}}.

\bibitem{SLCbasic}
{NICONET Society}.
\newblock {SLICOT} basic systems and control toolbox, 2005.
\newblock See \url{http://www.slicot.org/}.

\bibitem{morRedP22}
M.~Redmann and I.~Pontes~Duff.
\newblock Model order reduction for bilinear systems with non-zero initial
  states--different approaches with error bounds.
\newblock {\em Internat. J. Control}, pages 1--14, 2022.
\newblock \href {http://dx.doi.org/10.1080/00207179.2022.2053209}
  {\path{doi:10.1080/00207179.2022.2053209}}.

\bibitem{morReiS08}
T.~Reis and T.~Stykel.
\newblock Balanced truncation model reduction of second-order systems.
\newblock {\em Math. Comput. Model. Dyn. Syst.}, 14(5):391--406, 2008.
\newblock \href {http://dx.doi.org/10.1080/13873950701844170}
  {\path{doi:10.1080/13873950701844170}}.

\bibitem{Saak2021}
J.~Saak, M.~K{\"o}hler, and P.~Benner.
\newblock {M}-{M}.{E}.{S}.{S}.-2.1 -- {The} {Matrix} {Equation} {Sparse}
  {Solver} {Library}, apr 2021.
\newblock see also: \url{https://www.mpi-magdeburg.mpg.de/projects/mess}.
\newblock \href {http://dx.doi.org/10.5281/zenodo.4719688}
  {\path{doi:10.5281/zenodo.4719688}}.

\bibitem{morSaaSW19}
J.~Saak, D.~Siebelts, and S.~W.~R. Werner.
\newblock A comparison of second-order model order reduction methods for an
  artificial fishtail.
\newblock {\em at-Auto\-mati\-sie\-rungs\-tech\-nik}, 67(8):648--667, 2019.
\newblock \href {http://dx.doi.org/10.1515/auto-2019-0027}
  {\path{doi:10.1515/auto-2019-0027}}.

\bibitem{morSalL06}
B.~Salimbahrami and B.~Lohmann.
\newblock Order reduction of large scale second-order systems using {K}rylov
  subspace methods.
\newblock {\em Linear Algebra Appl.}, 415(2--3):385--405, 2006.
\newblock \href {http://dx.doi.org/10.1016/j.laa.2004.12.013}
  {\path{doi:10.1016/j.laa.2004.12.013}}.

\bibitem{morSchV20}
C.~Schr{\"o}der and M.~Voigt.
\newblock Balanced truncation model reduction with a priori error bounds for
  {LTI} systems with nonzero initial value.
\newblock {\em arXiv preprint arXiv:2006.02495}, 2020.

\bibitem{morSchUBG18}
P.~Schulze, B.~Unger, C.~A. Beattie, and S.~Gugercin.
\newblock Data-driven structured realization.
\newblock {\em Linear Algebra Appl.}, 537:250--286, 2018.
\newblock \href {http://dx.doi.org/10.1016/j.laa.2017.09.030}
  {\path{doi:10.1016/j.laa.2017.09.030}}.

\bibitem{morSor05}
D.~C. Sorensen.
\newblock Passivity preserving model reduction via interpolation of spectral
  zeros.
\newblock {\em Systems Control Lett.}, 54(4):347--360, 2005.
\newblock \href {http://dx.doi.org/10.1016/j.sysconle.2004.07.006}
  {\path{doi:10.1016/j.sysconle.2004.07.006}}.

\bibitem{morWiki_msd}
{The MORwiki Community}.
\newblock Mass-spring-damper system.
\newblock hosted at {MORwiki} -- Model Order Reduction Wiki, 2018.
\newblock URL:
  \url{https://morwiki.mpi-magdeburg.mpg.de/morwiki/index.php/Mass-Spring-Damper}.

\bibitem{morTomP87}
M.~S. Tombs and I.~Postlethwaite.
\newblock Truncated balanced realization of a stable non-minimal state-space
  system.
\newblock {\em Internat. J. Control}, 46(4):1319--1330, 1987.
\newblock \href {http://dx.doi.org/10.1080/00207178708933971}
  {\path{doi:10.1080/00207178708933971}}.

\end{thebibliography}
  
\end{document}